\newcommand{\bcen}{\begin{center}}     \newcommand{\ecen}{\end{center}}
\newcommand{\bay}{\begin{array}}      \newcommand{\eay}{\end{array}}
\newcommand{\beq}{\begin{eqnarray*}}      \newcommand{\eeq}{\end{eqnarray*}}
\def\dim{\mathrm{dim}}
\def\Ext{\mathrm{Ext}}
\def\gl{\mathrm{gl.dim}}
\def\HH{\mathrm{HH}}
\def\Hom{\mathrm{Hom}}
\def\id{\mathrm{id}}
\def\Im{\mathrm{Im}}
\def\inj{\mathrm{inj}}
\def\mod{\mathrm{mod}}
\def\Mod{\mathrm{Mod}}
\def\op{\mathrm{op}}
\def\pd{\mathrm{pd}}
\def\per{\mathrm{per}}
\def\proj{\mathrm{proj}}
\def\rad{\mathrm{rad}}
\def\RHom{\mathrm{RHom}}
\def\thick{\mathrm{thick}}
\begin{document}

\newtheorem{theorem}{Theorem}
\newtheorem{proposition}{Proposition}
\newtheorem{lemma}{Lemma}
\newtheorem{corollary}{Corollary}
\newtheorem{remark}{Remark}
\newtheorem{example}{Example}
\newtheorem{definition}{Definition}
\newtheorem*{conjecture}{Conjecture}
\newtheorem{question}{Question}

\title{\large\bf Eventually homological isomorphisms in recollements of derived categories}

\author{\large Yongyun Qin}

\date{\footnotesize College of Mathematics and Statistics,
Qujing Normal University, \\ Qujing, Yunnan 655011, China. E-mail:
qinyongyun2006@126.com }

\maketitle

\begin{abstract} For a recollement $(\mathcal{D}B,\mathcal{D}A,\mathcal{D}C)$ of derived categories
of algebras, we investigate when the functor $j^*:\mathcal{D}A\rightarrow\mathcal{D}C$
is an eventually homological isomorphism. In this context, we
compare the algebras $A$ and $C$ with respect to Gorensteinness, singularity categories
and the finite generation condition Fg for the Hochschild cohomology.
The results are applied to stratifying ideals,
triangular matrix algebras and
derived discrete algebras.
\end{abstract}

\medskip

{\footnotesize {\bf Mathematics Subject Classification (2010)}:
16G10; 18E30}

\medskip

{\footnotesize {\bf Keywords} : Recollements; eventually homological isomorphisms;
Gorensteinness; singularity categories;
finite generation condition; Hochschild cohomology; derived discrete algebras. }

\section{\large Introduction}

\indent\indent Recollement of triangulated categories, introduced by
Beilinson et al. \cite{BBD82}, is an important tool
in algebraic geometry and representation theory. In particular,
a recollement $(\mathcal{D}B,\mathcal{D}A,\mathcal{D}C)$ of derived categories
of algebras provide a useful framework
for comparing the algebras $A$, $B$ and $C$ with respect to certain homological properties,
such as global dimension \cite{Wie91,Koe91,AKLY17},
finitistic dimension \cite{Hap93,CX17}, Hochschild homology and
cyclic homology \cite{Kel98}, Hochschild cohomology \cite{Han14},
Gorensteinness \cite{Pan13,QH16}, and so on.
Meanwhile, like recollement of triangulated categories,
recollement of abelian categories attracts a lot of attention in
recent years \cite{FP05,Psa14,PSS14,PJ14}. In particular,
recollement of abelian categories $(\mathcal{A},\mathcal{B},\mathcal{C})$
with the functor $e:\mathcal{B}\rightarrow \mathcal{C}$ being an
eventually homological isomorphism was used as a common context
to compare the Gorensteinness, singularity categories
and the Fg condition for the algebras
$A$ and $eAe$, where $e$ is an idempotent of $A$ \cite{PSS14}.
The motivation of this paper is to give a derived categories version of this work.

Let $(\mathcal{D}B,\mathcal{D}A,\mathcal{D}C)$ be a recollement of derived categories
of algebras. The functor $j^*:\mathcal{D}A \rightarrow \mathcal{D}C$
is called an {\it eventually homological isomorphism}
if there is an integer $t$ such that
for every pair of finitely generated right $A$-modules
$M$ and $N$, and every $j > t$, there is an isomorphism
$$\Hom_{\mathcal{D}A}(M,N[j])\cong \Hom_{\mathcal{D}C}(j^*M,j^*N[j])$$
of abelian groups. Our first main theorem characterizes
when the functor $j^*:\mathcal{D}A \rightarrow \mathcal{D}C$ in
a recollement $(\mathcal{D}B,\mathcal{D}A,\mathcal{D}C)$
is an eventually homological isomorphism.

\medskip

 {\bf Theorem A.} {\it Let $A$, $B$ and $C$ be finite dimensional algebras over an algebraically
closed field $k$, and let ($\mathcal{D} B$,\ $\mathcal{D} A$,\ $\mathcal{D} C$,\ $i^*,i_*,i^!,j_!,j^*,j_*$)
be a standard recollement defined
by $X \in D^b(C^{op} \otimes A)$ and $Y \in D^b(A^{op} \otimes B)$.
Suppose $X^*=\RHom_A(X,A)$ and $Y^*=\RHom_B(Y,B)$.
Then the following are equivalent:

{\rm (a)} The functor $j^*$ is an eventually homological isomorphism;

{\rm (b)} $\gl B<\infty$, $_AY\in K^b(\proj A^{op})$ and $Y_A^*\in K^b(\proj A)$;

{\rm (b')} $\gl B<\infty$, $_CX\in K^b(\proj C^{op})$ and $X_C^*\in K^b(\proj C)$;

{\rm (c)} $\RHom_B(Y,Y)\in K^b(\proj A^e)$, where $A^e=A^{op}\otimes_kA$.
}

\medskip

Here, we refer \cite{Han14}
or Definition 1 for the concept of standard recollement.
In order to describe our second theorem, we recall the following
three definitions briefly.
A finite dimensional algebra $A$ is said
to be {\it Gorenstein} if $\id_AA < \infty $ and $\id_{A^{op}}A < \infty$;
The {\it singularity category} of $A$ is
defined to be the Verdier quotient $D^b(\mod A)/ K^b(\proj A)$,
and two algebras are said to be {\it singularly equivalent} if there is
a triangle equivalent between their singularity categories;
$A$ is said to {\it satisfy the Fg condition} if the
Hochschild cohomology ring $HH^*(A)$ is Noetherian
and the Yoneda algebra $\Ext _A^*(A/ \rad A, A/ \rad A)$ is a finitely generated
$HH^*(A)$-module
(for more details and backgrounds, see Section 4.3).
Our second theorem shows that recollement $(\mathcal{D}B,\mathcal{D}A,\mathcal{D}C)$
with the functor $j^*:\mathcal{D}A\rightarrow\mathcal{D}C$ being an eventually homological isomorphism
is a very good context
to compare the algebras $A$ and $C$.

\medskip

 {\bf Theorem B.} {\it Let $A$, $B$ and $C$ be finite dimensional algebras over an algebraically
closed field $k$, and let ($\mathcal{D} B$,\ $\mathcal{D} A$,\ $\mathcal{D} C$,
\ $i^*,i_*,i^!,j_!,j^*,j_*$)
be a recollement such that the functor $j^*$ is an
eventually homological isomorphism.
Then the following assertions hold true:

{\rm (a)} $A$ is Gorenstein if and only if so is $C$;

{\rm (b)} The algebras $A$ and $C$ are singularly equivalent;

{\rm (c)} $A$ satisfies Fg if and only if so does $C$.
}

\medskip

Applying Theorem B to recollements induced by idempotents,
we recover a result of Nagase, where the algebras $A$ and $eAe$
are compared, for an idempotent $e$ and a stratifying ideal $AeA$
\cite{Nag11}. Also, we recover some relevant results in
triangular matrix algebras. Finally, we show that
derived discrete algebras can be reduced to $k$ or
$2$-truncated cycle algebras, via recollements of derived categories
with the functor $j^*$ being an
eventually homological isomorphism. As an application,
we prove that derived discrete algebras satisfy the Fg condition.

The paper is organized as follows: In section 2, we will recall
some notions and results on recollements of derived categories.
Section 3 is about eventually homological isomorphisms in recollements
of derived categories, in which Theorem A is obtained.
In section 4, we
will prove Theorem B. In section 5,
we apply our main theorem to stratifying ideals,
triangular matrix algebras and
derived discrete algebras.

\section{\large Definitions and conventions}
\begin{definition}{\rm (Beilinson-Bernstein-Deligne \cite{BBD82})
Let $\mathcal{T}_1$, $\mathcal{T}$ and $\mathcal{T}_2$ be
triangulated categories. A {\it recollement} of $\mathcal{T}$
relative to $\mathcal{T}_1$ and $\mathcal{T}_2$ is given by
$$\xymatrix@!=4pc{ \mathcal{T}_1 \ar[r]^{i_*=i_!} & \mathcal{T} \ar@<-3ex>[l]_{i^*}
\ar@<+3ex>[l]_{i^!} \ar[r]^{j^!=j^*} & \mathcal{T}_2
\ar@<-3ex>[l]_{j_!} \ar@<+3ex>[l]_{j_*}}$$
and denoted by ($\mathcal{T}_1$, $\mathcal{T}$, $\mathcal{T}_2$,\ $i^*,i_*,i^!,j_!,j^*,j_*$)
(or just ($\mathcal{T}_1$, $\mathcal{T}$, $\mathcal{T}_2$))
such that

(R1) $(i^*,i_*), (i_*,i^!), (j_!,j^*)$ and $(j^*,j_*)$ are adjoint
pairs of triangle functors;

(R2) $i_*$, $j_!$ and $j_*$ are full embeddings;

(R3) $j^*i_*=0$ (and thus also $i^!j_*=0$ and $i^*j_!=0$);

(R4) for each $X \in \mathcal {T}$, there are triangles

$$\begin{array}{l} j_!j^*X \rightarrow X  \rightarrow i_*i^*X  \rightarrow
\\ i_!i^!X \rightarrow X  \rightarrow j_*j^*X  \rightarrow
\end{array}$$ where the arrows to and from $X$ are the counits and the
units of the adjoint pairs respectively. }
\end{definition}

Let $k$ be a field, $D:=\Hom _k(-, k)$ and $ \otimes
:= \otimes _k$. Throughout the paper, all algebras are assumed to be
finite dimensional
algebras over $k$. Let $A$ be such an algebra.
Denote by $\Mod A$ the
category of right $A$-modules, and by $\mod A$ (resp. $\proj A$
and $\inj A$) its full subcategories consisting of all finitely
generated modules (resp. finitely generated projective
modules and injective modules).  For
$* \in \{{\rm nothing}, b \}$, denote by $\mathcal{D}^*(\Mod
A)$ (resp. $\mathcal{D}^*(\mod A)$) the derived category of
(cochain) complexes of objects in $\Mod A$ (resp. $\mod A$)
satisfying the corresponding boundedness condition. Denote by
$K^b(\proj A)$ the homotopy category of
bounded complexes of objects in $\proj A$.
Up to isomorphism, the objects in $K^{b}(\proj A)$ are
precisely all the compact objects in $\mathcal{D}(\Mod A)$. For
convenience, we do not distinguish $K^{b}(\proj A)$ from the {\it
perfect derived category} $\mathcal{D}_{\per}(A)$ of $A$, i.e., the
full triangulated subcategory of $\mathcal{D} A$ consisting of all
compact objects, which will not cause any confusion. Moreover, we
also do not distinguish $K^b(\inj A)$ and
$\mathcal{D}^b(\mod A)$ from their essential images under the
canonical full embeddings into $\mathcal{D}(\Mod A)$. Usually, we
just write $\mathcal{D} A$ (resp. $D^b(A)$) instead of $ \mathcal{D}(\Mod A)$
(resp. $\mathcal{D}^b(\mod A)$). In this paper,
all functors between triangulated categories are assumed
to be triangulated functors.

\begin{definition}\label{def-standard-recollement}{\rm (Han \cite{Han14})
Let $A,B$ and $C$ be algebras.
An recollement ($\mathcal{D} B$,\ $\mathcal{D} A$,\ $\mathcal{D} C$,\ $i^*,i_*,i^!,j_!,j^*,j_*$) is
said to be {\it standard} and {\it defined by} $Y \in \mathcal
{D}^b(A^{\op} \otimes B)$ and $X \in \mathcal {D}^b(C^{\op} \otimes A)$
if $i^* \cong -\otimes^L_A Y$ and $j_! \cong -\otimes^L_CX$.
}
\end{definition}

\begin{proposition}\label{prpo-functor} {\rm (Han \cite{Han14})
Let $A,B$ and $C$ be algebras, and ($\mathcal{D} B$,\ $\mathcal{D} A$,\ $\mathcal{D} C$,\ $i^*,i_*,i^!,j_!,j^*,j_*$)
a standard recollement defined by $Y \in \mathcal
{D}^b(A^{\op} \otimes B)$ and $X \in \mathcal {D}^b(C^{\op} \otimes A)$. Then
$$\begin{array}{ll}
i^*\cong -\otimes^L_A Y, & j_! \cong -\otimes^L_CX, \\
i_*=\RHom _B(Y, -)=-\otimes^L_B Y^*, & j^*=\RHom _A(X, -)=-\otimes^L_A X^*, \\
i^!=\RHom _A(Y^*, -), & j_*=\RHom _C(X^*, -),\\
\end{array}$$
where $X^*=\RHom_A(X,A)$ and $Y^*=\RHom_B(Y,B)$.
}
\end{proposition}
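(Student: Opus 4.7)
The plan is to derive all six descriptions from the two assumed ones, $i^*\cong -\otimes^L_AY$ and $j_!\cong -\otimes^L_CX$, by chasing adjunctions and invoking the standard tensor-Hom formulas for bimodule complexes.

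First I would apply the derived tensor-Hom adjunction: for $Y\in \mathcal{D}^b(A^{\op}\otimes B)$ the pair $(-\otimes^L_AY,\ \RHom_B(Y,-))$ is adjoint between $\mathcal{D}A$ and $\mathcal{D}B$. Since the right adjoint of $i^*$ is unique up to isomorphism, this immediately forces $i_*\cong \RHom_B(Y,-)$. The same reasoning applied to $X$ yields $j^*\cong \RHom_A(X,-)$.

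Next I would extract the tensor-product descriptions of $i_*$ and $j^*$. The existence of the further right adjoint $i^!$ in the recollement means that $i_*\cong \RHom_B(Y,-)$ preserves arbitrary coproducts; but $\RHom_B(Y,-)$ commutes with coproducts precisely when $Y$ is compact as a right $B$-module, i.e.\ $_BY\in K^b(\proj B)$ after forgetting the left $A$-action. For such $Y$ one has the standard natural isomorphism $\RHom_B(Y,-)\cong -\otimes^L_B\RHom_B(Y,B)=-\otimes^L_BY^*$, which gives $i_*\cong -\otimes^L_BY^*$. The same argument, using that $j^*$ has the right adjoint $j_*$ and therefore $X$ is compact over $A$, yields $j^*\cong -\otimes^L_AX^*$. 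Taking right adjoints once more and using tensor-Hom adjunction for the bimodule complex $Y^*$ (respectively $X^*$) produces $i^!\cong \RHom_A(Y^*,-)$ and $j_*\cong \RHom_C(X^*,-)$.

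The step I expect to be the main obstacle is verifying, on the one hand, that the coproduct-preservation criterion really forces $_BY$ to lie in $K^b(\proj B)$ in the present bounded-complex setting, and on the other hand that $Y^*=\RHom_B(Y,B)$ actually lives in $\mathcal{D}^b(B^{\op}\otimes A)$ with the correct bimodule structure so that the final adjunction applies. Both points are routine bimodule bookkeeping once $_BY$ is known to be perfect, but they are the only places where anything beyond formal manipulation of adjunctions is required; the parallel assertions for $X$ and $X^*$ follow by an entirely symmetric argument.
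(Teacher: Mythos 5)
Your proposal is correct and follows the standard route — the same one underlying Han's proof in \cite{Han14}, which the paper cites without reproducing. You start from the two defining functors, obtain $i_*\cong\RHom_B(Y,-)$ and $j^*\cong\RHom_A(X,-)$ from uniqueness of adjoints, use the existence of the further right adjoints $i^!$ and $j_*$ to force $Y_B$ and $X_A$ to be compact (hence perfect), invoke the duality $\RHom_B(Y,-)\cong -\otimes^L_B Y^*$ for perfect $Y_B$, and then read off $i^!$ and $j_*$ by one more adjunction. One notational slip: since $Y\in\mathcal{D}^b(A^{\op}\otimes B)$ is a left $A$-, right $B$-bimodule complex, the compactness condition should be stated for $Y_B$ (right $B$-action), not $_BY$; the content of your argument is unaffected.
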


Two recollements ($\mathcal{T}_1$, $\mathcal{T}$, $\mathcal{T}_2$, $i^*,i_*,i^!,j_!,j^*,j_*$)
and ($\mathcal{T}_1'$, $\mathcal{T}'$, $\mathcal{T}_2'$, ${i^*}',i_*',{i^!}',j_!',
\linebreak {j^*}',j_*'$)
are said to be {\it equivalent} if $(\Im j_!, \Im i_*, \Im j_*)=(\Im j_!', \Im i_*', \Im j_*')$.
From \cite[Proposition 3 and Remark 1]{Han14}, every recollement of derived categories
of algebras is equivalent to a standard one.

\section{\large Proof of Theorem A}

\indent\indent Let $A$ and $B$ be two algebras.
Given a functor $F:\mathcal{D}A \rightarrow \mathcal{D}B$, $F$
is called an {\it eventually homological isomorphism}
if there is an integer $t$ such that
for every pair of objects $M$ and $N$ in $\mod A$, and every $j > t$, there is an isomorphism
$$\Hom_{\mathcal{D}A}(M,N[j])\cong \Hom_{\mathcal{D}B}(FM,FN[j])$$
of abelian groups. This definition is taken from \cite[Section 3]{PSS14}
with a minor modification.

In this section, we will characterizes
when the functor $j^*:\mathcal{D}A \rightarrow \mathcal{D}C$ in
a recollement $(\mathcal{D}B,\mathcal{D}A,\mathcal{D}C)$
is an eventually homological isomorphism, that is, we will
prove Theorem A.
This result is used in Section 4 for comparing Gorensteinness,
singularity categories and the Fg condition of the algebras
$A$ and $C$. Let's start with the following lemmas.

\begin{lemma}\label{lemma-ehi}
Assume that $F:\mathcal{D}A \rightarrow \mathcal{D}B$ is an eventually homological isomorphism.
Let $X$, $Y \in D^b(\mod A)$ such that $H^i(X)=H^i(Y)=0$,
for any $i<m$ or $i>n$. Then there is an integer $t$ such that
$\Hom_{\mathcal{D}A}(X, Y[j])
\cong \Hom_{\mathcal{D}B}(FX, FY[j])$, for
every $j>t$.
\end{lemma}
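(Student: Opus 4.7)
The plan is a double induction on the widths of the cohomology of $X$ and $Y$, reducing everything to the definition of eventually homological isomorphism for single modules. Let $t_0$ be the integer provided by the hypothesis on $F$, so that $\Hom_{\mathcal{D}A}(M,N[j])\cong \Hom_{\mathcal{D}B}(FM,FN[j])$ for all $M,N\in\mod A$ and every $j>t_0$.

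For the base case I would take $X=M[-p]$ and $Y=N[-q]$, where $M,N\in\mod A$ and $m\le p,q\le n$. Since $F$ is triangulated, $FX=FM[-p]$ and $FY=FN[-q]$, so the statement reduces to
\[
\Hom_{\mathcal{D}A}(M,N[j+p-q])\cong \Hom_{\mathcal{D}B}(FM,FN[j+p-q]),
\]
which holds as soon as $j+p-q>t_0$, i.e.\ as soon as $j>t_0+(q-p)\ge t_0-(n-m)$. Taking $t\ge t_0+(n-m)$ then handles all such pairs.

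For the inductive step I would use the standard truncation triangle
\[
\tau^{\le m}X\longrightarrow X\longrightarrow \tau^{>m}X\longrightarrow (\tau^{\le m}X)[1],
\]
where $\tau^{\le m}X\cong H^m(X)[-m]$ is concentrated in a single degree and $\tau^{>m}X$ has cohomology in $[m+1,n]$; both lie in $D^b(\mod A)$ because $H^m(X)\in\mod A$. Applying $\Hom_{\mathcal{D}A}(-,Y[j])$ on one side and $\Hom_{\mathcal{D}B}(F-,FY[j])$ on the other yields two long exact sequences, and $F$ sends the truncation triangle to a triangle in $\mathcal{D}B$. By the induction hypothesis, the required isomorphisms hold for $\tau^{\le m}X$ and $\tau^{>m}X$ (as first variable) against $Y$ for all sufficiently large shifts; the five-lemma then gives the isomorphism for $X$ against $Y$, at the cost of increasing the threshold by $1$ (since the five-lemma requires control in degrees $j-1, j, j+1$). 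An entirely symmetric truncation argument on $Y$ completes the induction on its width. Combining both inductions, a threshold of roughly $t_0+(n-m)+(\text{number of inductive steps})$ suffices.

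The argument is essentially routine; the only thing that needs care is bookkeeping of the threshold $t$, which grows by a bounded amount (depending only on $n-m$) at each inductive step, so a single finite $t$ works for all $X,Y$ with cohomology in $[m,n]$. The potential obstacle — that truncations might leave the subcategory $D^b(\mod A)$ — does not arise, since $H^i(X)\in\mod A$ for each $i$, so both truncations remain in $D^b(\mod A)$ and the induction hypothesis applies unconditionally.
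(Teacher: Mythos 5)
Your proof is correct and takes essentially the same approach as the paper, which reduces to bounded complexes concentrated in degrees $[m,n]$ and then cites the truncation technique of \cite[Lemma 1.6]{Kato98} for exactly the decompose-and-five-lemma induction you spell out. Both arguments tacitly use that the isomorphism in the definition of an eventually homological isomorphism is the natural map induced by $F$ (as in \cite{PSS14}), so that the long exact sequences assemble into a commuting ladder to which the five lemma applies; granted this, your argument is sound, and the precise threshold constant (your $t_0+2(n-m)$ versus the paper's $t_0+n-m$) is immaterial since the lemma only asserts the existence of some $t$.
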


\begin{proof}
Since $F$ is an eventually homological isomorphism, there
exists some $t_0$ such that
$$\Hom_{\mathcal{D}A}(M,N[j])\cong \Hom_{\mathcal{D}B}(FM,FN[j]),$$
for any $M, N \in \mod A$ and every $j > t_0$.
Up to quasi-isomorphism, we assume that $X$ and $Y$
are of the form
$$X:\ \ 0 \longrightarrow X^{m} \longrightarrow
X^{m+1} \longrightarrow \cdots \longrightarrow X^{n} \longrightarrow 0 ,$$
$$Y:\ \ 0 \longrightarrow Y^{m} \longrightarrow
Y^{m+1} \longrightarrow \cdots \longrightarrow Y^{n} \longrightarrow 0 .$$
Using truncation technique just like \cite[Lemma 1.6]{Kato98}, we can
prove that $\Hom_{\mathcal{D}A}(X, Y[j])
\cong \Hom_{\mathcal{D}B}(FX, FY[j])$, for
every $j>t_0+n-m$.
\end{proof}

\begin{lemma}\label{lemma-restrict-bound}
Let $A$, $B$ and $C$ be finite dimensional
algebras over a field $k$, and let
($\mathcal{D} B$,\ $\mathcal{D} A$,\ $\mathcal{D} C$,\ $i^*,i_*,i^!,j_!,j^*,j_*$)
be a recollement.
Then the following hold true.

{\rm (1)} For every $M\in \mod B$, there exist two integers $m$ and $n$ such that
$H^i(i_*M)=0$, for any $i<m$ or $i>n$.

{\rm (2)} If $i_*$ restricts to $K^b(\proj)$, that is,
$i_*$ sends $K^b(\proj B)$ to $K^b(\proj A)$, then there exist two integers $m_1$ and $n_1$ such that
$H^i(i^!M)=0$, for any $i<m_1$ or $i>n_1$, and every $M\in \mod A$.

{\rm (3)} If $i_*$ restricts to $K^b(\inj)$, then there exist two integers $m_2$ and $n_2$ such that
$H^i(i^*M)=0$, for any $i<m_2$ or $i>n_2$, and every $M\in \mod A$.

\end{lemma}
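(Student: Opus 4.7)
The plan is to leverage Proposition \ref{prpo-functor} together with the extra structure coming from the adjunctions in the recollement. The key preliminary observation is that $i_*$ is a left adjoint (to $i^!$) and therefore preserves arbitrary coproducts in $\mathcal{D}A$. Writing $i_*=\RHom_B(Y,-)$, this forces $Y$ to be a compact object of $\mathcal{D}B$, i.e., $Y\in K^b(\proj B)$ when viewed as a complex of right $B$-modules. For part (1), I would then choose a quasi-isomorphic representative of $Y$ by a bounded complex of finitely generated projective right $B$-modules concentrated in degrees $[s,t]$. For $M\in\mod B$, $i_*M=\RHom_B(Y,M)$ is computed strictly as $\Hom_B^\bullet(Y,M)$, a complex supported in degrees $[-t,-s]$; hence $H^i(i_*M)=0$ for $i<-t$ or $i>-s$, in fact uniformly in $M$.

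For part (2), I would apply the hypothesis $i_*(K^b(\proj B))\subseteq K^b(\proj A)$ to the regular module $B$ itself: this gives $Y^*=\RHom_B(Y,B)=i_*B\in K^b(\proj A)$. Representing $Y^*$ by a bounded complex of finitely generated projective right $A$-modules in degrees $[s_1,t_1]$, and using $i^!=\RHom_A(Y^*,-)$ from Proposition \ref{prpo-functor}, one sees that $i^!M$ is supported in degrees $[-t_1,-s_1]$ for every $M\in\mod A$.

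For part (3), I would apply the hypothesis to $DB\in K^b(\inj B)$, where $D=\Hom_k(-,k)$: this yields $i_*(DB)=\RHom_B(Y,DB)\in K^b(\inj A)$. The tensor--hom adjunction (the standard identity $\Hom_B(-,DB)\cong D(-)$) gives $\RHom_B(Y,DB)\cong DY$ as right $A$-modules, so $DY\in K^b(\inj A)$; the standard $k$-duality $D\colon K^b(\inj A)\to K^b(\proj A^{op})$ then yields $Y\in K^b(\proj A^{op})$ as a complex of left $A$-modules. Choosing a bounded projective representative of $Y$ in degrees $[s_2,t_2]$ and invoking $i^*=-\otimes^L_A Y$, one sees that $i^*M\cong M\otimes_A Y$ is supported in degrees $[s_2,t_2]$ for every $M\in\mod A$. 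The only genuinely delicate point is the translation carried out here: the injective-restriction hypothesis concerns objects in the target $\mathcal{D}A$, whereas the formula for $i^*$ requires information about a projective resolution of $Y$ on the \emph{left}, and the $k$-duality applied to the canonical cogenerator $DB$ is the mechanism that bridges the two sides; everything else is standard bounded-complex bookkeeping.
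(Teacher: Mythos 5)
Your proof is substantially correct and follows the same underlying strategy as the paper, but it is phrased for \emph{standard} recollements (invoking $i_*=\RHom_B(Y,-)$, $i^!=\RHom_A(Y^*,-)$, $i^*=-\otimes^L_A Y$), whereas the lemma is stated for a \emph{general} recollement. The paper's proof avoids this by working with general recollement data: for (1) it uses that $i^*$ preserves compacts, so $i^*A\in K^b(\proj B)$, and then $H^i(i_*M)\cong\Hom_{\mathcal{D}B}(i^*A,M[i])$ vanishes outside a bounded range; for (3) it bounds $DH^i(i^*M)\cong\Hom_{\mathcal{D}A}(M,i_*(DB)[-i])$ directly using $i_*(DB)\in K^b(\inj A)$. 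Your argument is really the same one in disguise --- in the standard case $Y=i^*A$, $Y^*=i_*B$, and $DY\cong i_*(DB)$ --- so the compactness you extract from ``$i_*$ is a left adjoint'' is precisely the compactness of $i^*A$. To make your proof apply as stated, either replace $Y$, $Y^*$, $DY$ by $i^*A$, $i_*B$, $i_*(DB)$ and use the adjunction isomorphisms $H^i(i_*M)\cong\Hom_{\mathcal{D}B}(i^*A,M[i])$, $H^i(i^!M)\cong\Hom_{\mathcal{D}A}(i_*B,M[i])$, $DH^i(i^*M)\cong\Hom_{\mathcal{D}A}(M,i_*(DB)[-i])$ throughout, or explicitly invoke the reduction to a standard recollement (cf.\ Corollary~\ref{cor-ehi-stand} in the paper) and justify that the boundedness conclusions are invariant under recollement equivalence.

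Beyond this caveat, your part (3) takes a mildly different and interesting route: rather than bounding $DH^i(i^*M)$ directly, you first deduce the structural fact $_AY\in K^b(\proj A^{op})$ from $i_*(DB)\cong DY\in K^b(\inj A)$ via $k$-duality, and then read off the boundedness from $i^*M\cong M\otimes_A Y$. This is valid (your observation that $\RHom_B(Y,DB)\cong DY$ is the correct application of tensor-hom adjunction, and the derived tensor $M\otimes^L_A Y$ may indeed be computed with a bounded $A^{op}$-projective representative of $Y$), and it has the advantage of anticipating the equivalence $(a)\Leftrightarrow(b)$ of Theorem~\ref{theorem-main-2}, where ${}_AY\in K^b(\proj A^{op})$ appears as a hypothesis. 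The paper instead keeps the two steps separate, proving this lemma intrinsically and deferring the bimodule-level characterization to Theorem~\ref{theorem-main-2} via citations to \cite{AKLY17} and \cite{QH16}.

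Minor slip: ``preserves arbitrary coproducts in $\mathcal{D}A$'' should read ``in $\mathcal{D}B$'', since $i_*\colon\mathcal{D}B\to\mathcal{D}A$.
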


\begin{proof}
(1): By \cite[Lemma 2.9 (e)]{AKLY17}, the functor
$i^*$ restricts to $K^b(\proj)$, and thus, $i^*A\in K^b(\proj B)$.
Assume $i^*A$ is quasi-isomorphic to a projective complex $P^\bullet$ of the form:
$$0 \longrightarrow P^{-n} \longrightarrow
P^{-n+1} \longrightarrow \cdots \longrightarrow P^{-m} \longrightarrow 0 .$$
For any $M\in \mod B$ and $i\in \mathbb{Z}$, we have
$$H^i(i_*M)\cong \Hom _{\mathcal{D} A}(A, i_*M[i])
\cong \Hom _{\mathcal{D} B}(i^*A, M[i])\cong \Hom _{\mathcal{D} B}(P^\bullet, M[i]).$$
Therefore, $H^i(i_*M)=0$, for any $i<m$ or $i>n$.

(2): This can be proved in a similar way as we did in (1).

(3): Assume $i_*(DA)$ is quasi-isomorphic to a injective complex $I^\bullet$ of the form:
$$0 \longrightarrow I^{-n_2} \longrightarrow
I^{-n_2+1} \longrightarrow \cdots \longrightarrow I^{-m_2} \longrightarrow 0 .$$
For any $M\in \mod A$ and $i\in \mathbb{Z}$, we have
$$DH ^i(i^*M)\cong
H ^{-i} (D(i^*M)) \cong
\Hom _{\mathcal{D}k}(i^*M, k[-i]) \cong
\Hom _{\mathcal{D}A}(i^*M, DA[-i]).$$
Here, the last isomorphism follows by adjunction. Using
adjointness again, we get
$$DH ^i(i^*M) \cong \Hom _{\mathcal{D}A}(i^*M, DA[-i]) \cong
\Hom _{\mathcal{D}B}(M, i_*DA[-i])$$
Therefore, $H^i(i^*M)=0$, for any $i<m_2$ or $i>n_2$.
\end{proof}

Now we are ready to prove Theorem A, which is divided into
Theorem 1 and Theorem 2.

\begin{theorem}\label{theorem-main-1}
Let $A$, $B$ and $C$ be finite dimensional
algebras over a field $k$, and let
($\mathcal{D} B$,\ $\mathcal{D} A$,\ $\mathcal{D} C$,\ $i^*,i_*,i^!,j_!,j^*,j_*$)
be a recollement.
Then the following are equivalent:

{\rm (a)} The functor $j^*$ is an eventually homological isomorphism;

{\rm (b)} $\gl B<\infty$, and $i_*$ restricts to both $K^b(\proj)$ and $K^b(\inj)$;

{\rm (b')} $\gl B<\infty$, and $j^*$ restricts to both $K^b(\proj)$ and $K^b(\inj)$.
\end{theorem}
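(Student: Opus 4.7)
The plan is to prove (a)$\Rightarrow$(b)$\Rightarrow$(a) and (b)$\Leftrightarrow$(b'), relying on Lemma~\ref{lemma-ehi} (to extend the eventually-homological-isomorphism property from $\mod A$ to bounded complexes) and Lemma~\ref{lemma-restrict-bound} (for uniform cohomological bounds on $i_*M$, $i^!M$, and $i^*M$).

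For (a)$\Rightarrow$(b), I first derive $\gl B<\infty$. For simples $S,T\in\mod B$, Lemma~\ref{lemma-restrict-bound}(1) places $i_*S,i_*T$ in $D^b(\mod A)$ with cohomological amplitude bounded by a constant depending only on $i^*A$; Lemma~\ref{lemma-ehi} applied to $j^*$ together with $j^*i_*=0$ then yields a single $t$ with
$$\Ext_B^j(S,T)\cong\Hom_{\mathcal{D}A}(i_*S,i_*T[j])\cong\Hom_{\mathcal{D}C}(0,0)=0$$
for every $j>t$ and every pair of simples. Since $B$ has finitely many simples, $\gl B<\infty$. Next, for $M\in\mod B$ and $N\in\mod A$, applying Lemma~\ref{lemma-ehi} to the pairs $(i_*M,N)$ and $(N,i_*M)$ forces $\Ext_A^j(i_*M,N)=0$ and $\Ext_A^j(N,i_*M)=0$ for $j\gg0$, with cut-offs depending only on $M$; hence $\pd_A i_*M<\infty$ and $\id_A i_*M<\infty$. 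Because $\gl B<\infty$ collapses $K^b(\proj B)=K^b(\inj B)=\thick(B)$ and $i_*$ is triangulated, the case $M=B$ is enough: $i_*$ restricts to both $K^b(\proj)$ and $K^b(\inj)$.

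For (b)$\Rightarrow$(a), apply $\Hom_{\mathcal{D}A}(M,-[j])$ to the triangle $i_*i^!N\to N\to j_*j^*N\to$ for $M,N\in\mod A$. The $(j^*,j_*)$ and $(i^*,i_*)$ adjunctions then give the desired isomorphism $\Hom_{\mathcal{D}A}(M,N[j])\cong\Hom_{\mathcal{D}C}(j^*M,j^*N[j])$ whenever $\Hom_{\mathcal{D}B}(i^*M,i^!N[\cdot])$ vanishes at both $j$ and $j+1$. Parts (2) and (3) of Lemma~\ref{lemma-restrict-bound} confine $i^!N$ and $i^*M$ to a fixed cohomological interval in $D^b(\mod B)$ uniformly in $M,N$, and $\gl B<\infty$ then delivers a uniform $t$ past which those $\Hom$-groups vanish --- exactly the constant required by the definition of an eventually homological isomorphism.

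For (b)$\Leftrightarrow$(b'), under $\gl B<\infty$ one has $K^b(\proj B)=K^b(\inj B)=\thick(B)$, so ``$i_*$ restricts to $K^b(\proj)$'' can be tested on $B$ and ``$j^*$ restricts to $K^b(\proj)$'' on $A$. I use the triangle $j_!j^*A\to A\to i_*i^*A\to$ together with the fact that $j_!$ both preserves and reflects compactness (being fully faithful with the coproduct-preserving right adjoint $j^*$), which converts ``$i_*i^*A\in K^b(\proj A)$'' into ``$j^*A\in K^b(\proj C)$'' and vice versa; the implication (b)$\Rightarrow$(b') is then immediate since $i^*A\in\thick(B)$ (using that $i^*$ always restricts to $K^b(\proj)$) forces $i_*i^*A\in i_*\thick(B)\subseteq K^b(\proj A)$, while for (b')$\Rightarrow$(b) one leverages $i^*i_*=\id$ to extend the restriction from $\thick(i^*A)$ to all of $\thick(B)$. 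The $K^b(\inj)$ halves are handled by running the same argument in the opposite recollement $(\mathcal{D}B^{op},\mathcal{D}A^{op},\mathcal{D}C^{op})$, under which $K^b(\proj)$ and $K^b(\inj)$ swap via $D=\Hom_k(-,k)$. The main obstacle I anticipate lies in (b')$\Rightarrow$(b): passing from $i_*i^*A\in K^b(\proj A)$ to $i_*B\in K^b(\proj A)$ requires $B$ to lie in the thick closure of $Y=i^*A$ inside $K^b(\proj B)$, and the only available tools are $\gl B<\infty$ and the fully faithfulness identity $i^*i_*=\id$, so one must analyse carefully what the image of $i^*$ on compact objects generates. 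A secondary concern throughout the proof is the uniformity of the cohomological bounds produced by Lemma~\ref{lemma-restrict-bound}, which must be preserved at each step so that a single constant $t$ --- not a function of the test modules --- is ultimately extracted.
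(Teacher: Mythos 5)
Your argument for the equivalence (a)$\Leftrightarrow$(b) follows essentially the same lines as the paper. For (a)$\Rightarrow$(b) the paper shows $\gl B<\infty$ via $\Ext_B^i(M,M')\cong\Hom_{\mathcal{D}A}(i_*M,i_*M'[i])$, Lemma~\ref{lemma-ehi}, Lemma~\ref{lemma-restrict-bound}(1), and $j^*i_*=0$, exactly as you do (your restriction to simples is just a minor shortcut), and then shows $i_*$ restricts to $K^b(\proj)$ by the same simple-module test; your further reduction to $M=B$ after $\gl B<\infty$ is established is a valid simplification. For (b)$\Rightarrow$(a) the paper applies $\Hom_{\mathcal{D}A}(-,N[i])$ to $j_!j^*M\to M\to i_*i^*M\to$, whereas you apply $\Hom_{\mathcal{D}A}(M,-[j])$ to the dual triangle $i_*i^!N\to N\to j_*j^*N\to$; by adjunction both reduce to vanishing of $\Hom_{\mathcal{D}B}(i^*M,i^!N[\,\cdot\,])$, controlled by Lemma~\ref{lemma-restrict-bound}(2),(3) and $\gl B<\infty$. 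So that part is the same argument in mirror-image.

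The genuine divergence is (b)$\Leftrightarrow$(b'). The paper simply cites \cite[Lemma 2.5 and Lemma 4.3]{AKLY17}; you attempt to reprove it from the compactness calculus. Your sketch of the projective half is on the right track, and the worry you flag at the end about $(\mathrm{b'})\Rightarrow(\mathrm{b})$ is in fact resolvable without new ideas: $i^*A$ is a compact generator of $\mathcal{D}B$ (because $i_*$ is fully faithful and $\Hom_{\mathcal{D}B}(i^*A,X[n])\cong\Hom_{\mathcal{D}A}(A,i_*X[n])$ detects vanishing of $i_*X$), and by the Neeman--Ravenel theorem the thick subcategory generated by a compact generator is the whole category of compact objects, so $\thick(i^*A)=K^b(\proj B)\ni B$ and hence $i_*(K^b(\proj B))\subseteq\thick(i_*i^*A)\subseteq K^b(\proj A)$. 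Note that $\gl B<\infty$ plays no role here, contrary to what your final paragraph suggests.

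Where your proposal has a real gap is the $K^b(\inj)$ half. Invoking ``the opposite recollement $(\mathcal{D}B^{\op},\mathcal{D}A^{\op},\mathcal{D}C^{\op})$'' via $D=\Hom_k(-,k)$ is not automatic: $D$ is only an equivalence at the level of bounded derived categories of finite-dimensional modules, not of the unbounded $\mathcal{D}A$ on which the recollement lives, and even for standard recollements the known transport to opposite algebras (\cite[Theorem 2]{Han14}) produces a recollement of the shape $(\mathcal{D}C^{\op},\mathcal{D}A^{\op},\mathcal{D}B^{\op})$, i.e.\ with $B$ and $C$ interchanged, which changes which functor plays the role of $j^*$. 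You would therefore need to either run a genuinely dual version of the compactness argument inside the original recollement (using $DA$ as a cogenerator, the triangle $i_*i^!\to\id\to j_*j^*\to$, and the fact that $K^b(\inj A)=\thick(DA)$), or simply cite \cite{AKLY17} as the paper does.
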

\begin{proof}
(a)$\Rightarrow$ (b):
For any $M$, $M'\in \mod B$, and any $i\in \mathbb{N}$, we have
$$\Ext_B^i(M,M')\cong \Hom _{\mathcal{D} B}(M, M'[i])
\cong\Hom _{\mathcal{D} A}(i_*M, i_*M'[i]).$$
By \cite[Lemma 2.9 (e)]{AKLY17}, we have $i_*M, i_*M'
\in D^b(\mod A)$. Therefore, Lemma~\ref{lemma-ehi} and Lemma~\ref{lemma-restrict-bound} (1)
yield that there exists some integer $t$ such that
$$ \Hom _{\mathcal{D} A}(i_*M, i_*M'[i])\cong \Hom _{\mathcal{D} C}(j^*i_*M, j^*i_*M'[i]),
\ \ \forall \ i>t .$$
Since $j^*i_*=0$, we obtain $\Ext_B^i(M,M')\cong0$, for any $i>t $.
Therefore, $\gl B< \infty$.

Now we claim $i_*$ restricts to $K^b(\proj)$, and the statement
$i_*$ restricts to $K^b(\inj)$ can be proved dually.
For any $P\in K^b(\proj B)$, we want to show
$i_*P\in K^b(\proj A)$. For this, since $i_*P\in D^b(\mod A)$, it is equivalent to
show that for any simple $A$-module $S$, there are only
finite many integers $n$ such that $\Hom _{\mathcal{D} A}
(i_*P,S[n])\neq 0$ (see the proof of \cite[Lemma 2.4 (c)]{AKLY17}).
Clearly,
$\Hom _{\mathcal{D} A}
(i_*P,S[n])= 0$ for sufficiently small $n$.
On the other hand, by Lemma~\ref{lemma-ehi}, there exists some integer $t$ such that
$\Hom _{\mathcal{D} A}
(i_*P,S[n])\cong \Hom _{\mathcal{D} C}
(j^*i_*P,j^*S[n])$, for any $n>t$. Since $j^*i_*=0$, we obtain that $\Hom _{\mathcal{D} A}
(i_*P,S[n])\cong0$, for any $n>t$.
Therefore, $i_*P\in K^b(\proj A)$.

\medskip

(b)$\Rightarrow$ (a): For any $M, N\in \mod B$ and $i\in\mathbb{N}$, applying the functor
$\Hom _{\mathcal{D} A}(-,N[i])$ to the triangle
$ j_!j^*M\rightarrow M\rightarrow i_*i^*M\rightarrow $,
we get exact sequence
$$\Hom _{\mathcal{D} A}
(i_*i^*M,N[i])\rightarrow \Hom _{\mathcal{D} A}
(M,N[i])\rightarrow \Hom _{\mathcal{D} A}
(j_!j^*M ,N[i]).$$
By Lemma~\ref{lemma-restrict-bound}, there exist
$m_1,m_2, n_1,n_2\in\mathbb{Z}$ such that $H^i(i^!M)=0$,
for any $i<m_1$ or $i>n_1$, and
$H^i(i^*M)=0$, for any $i<m_2$ or $i>n_2$. Since $\gl B<\infty$,
it follows from \cite[Lemma 1.6]{Kato98} that
there is an integer $t$ such that $\Hom _{\mathcal{D} B}
(i^*M,i^!N[i])=0$, for any $i>t$. Using adjointness,
we have $\Hom _{\mathcal{D} A}
(i_*i^*M,N[i])\cong 0$, for any $i>t$. Therefore,
$\Hom _{\mathcal{D} A}
(M,N[i])\cong\Hom _{\mathcal{D} A}
(j_!j^*M ,N[i])\cong \Hom _{\mathcal{D} A}
(j^*M ,j^*N[i])$, for any $i>t$. That is,
$j^*$ is an eventually homological isomorphism.

\medskip

(b)$\Leftrightarrow$ (b'): It follows from
\cite[Lemma 2.5 and Lemma 4.3]{AKLY17} that $i_*$ restricts to $K^b(\proj)$
if and only if $j^*$ restricts to $K^b(\proj)$.
Dually, $i_*$ restricts to $K^b(\inj)$
if and only if $j^*$ restricts to $K^b(\inj)$.
\end{proof}

\begin{corollary}\label{cor-ehi-stand}
Let $A$, $B$ and $C$ be finite dimensional
algebras over a field $k$, and let
($\mathcal{D} B$,\ $\mathcal{D} A$,\ $\mathcal{D} C$,\ $i^*,i_*,i^!,j_!,j^*,j_*$)
be a recollement such that the functor $j^*$ is an
eventually homological isomorphism. Then there is a standard recollement
($\mathcal{D} B$,\ $\mathcal{D} A$,\ $\mathcal{D} C$, ${i^*}',i_*',{i^!}',j_!',
{j^*}',j_*'$) such that the functor ${j^*}'$ is an
eventually homological isomorphism.
\end{corollary}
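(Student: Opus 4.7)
The plan is to combine Han's classification of recollements with the observation that the property of $j^*$ being an eventually homological isomorphism is invariant under equivalence of recollements. That is, I would produce the standard recollement from an abstract existence theorem, and then only have to transport a property across the equivalence.

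For the first step, I would simply invoke the result of Han quoted in the excerpt right after Proposition~\ref{prpo-functor}: every recollement of derived categories of algebras is equivalent to a standard one (see \cite[Proposition 3 and Remark 1]{Han14}). Applied to the given recollement, this produces a standard recollement $(\mathcal{D}B,\mathcal{D}A,\mathcal{D}C,{i^*}',i_*',{i^!}',j_!',{j^*}',j_*')$ with
$$\Im j_!' = \Im j_!, \qquad \Im i_*' = \Im i_*, \qquad \Im j_*' = \Im j_*.$$

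The main step is to transfer the eventually homological isomorphism property from $j^*$ to ${j^*}'$. Since $\Im i_*' = \Im i_*$, both $j^*$ and ${j^*}'$ vanish on this common subcategory, and by the axioms of a recollement each induces a triangle equivalence from the Verdier quotient $\mathcal{D}A / \Im i_*$ onto $\mathcal{D}C$. Composing one such equivalence with the quasi-inverse of the other yields a triangle auto-equivalence $\sigma : \mathcal{D}C \to \mathcal{D}C$ with a natural isomorphism ${j^*}' \cong \sigma \circ j^*$. Because $\sigma$ commutes with the shift and is fully faithful, it induces, for all $M,N \in \mod A$ and every integer $j$, a bijection
$$\Hom_{\mathcal{D}C}(j^*M, j^*N[j]) \xrightarrow{\ \sim\ } \Hom_{\mathcal{D}C}({j^*}'M, {j^*}'N[j]).$$
Hence, if $t$ is a threshold witnessing that $j^*$ is an eventually homological isomorphism, the same $t$ works for ${j^*}'$: for $j>t$ one composes the above bijection with the given isomorphism $\Hom_{\mathcal{D}A}(M,N[j]) \cong \Hom_{\mathcal{D}C}(j^*M,j^*N[j])$.

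The only non-formal ingredient is the existence of the auto-equivalence $\sigma$, but this is immediate from the universal property of Verdier quotients, since both $j^*$ and ${j^*}'$ factor through the canonical projection $\mathcal{D}A \to \mathcal{D}A/\Im i_*$ and the induced functors are equivalences. Apart from this, the argument is purely formal, so no real obstacle is expected.
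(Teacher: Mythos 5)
Your proof is correct, and it takes a genuinely different route from the paper's. The paper proceeds indirectly: it first applies Theorem~\ref{theorem-main-1} to convert the eventually homological isomorphism hypothesis into the conditions ``$\gl B<\infty$ and $i_*$ restricts to $K^b(\proj)$ and $K^b(\inj)$,'' notes that these conditions are equivalent to the recollement extending one step up and down (citing \cite[Lemma 3 and Lemma 4]{QH16}), invokes \cite{Han14} to pass to an equivalent standard recollement, argues that extendability is a property of the image triple and therefore carries over, and finally applies Theorem~\ref{theorem-main-1} a second time in the reverse direction. Your argument bypasses Theorem~\ref{theorem-main-1} entirely: from $\Im i_*=\Im i_*'$ you observe that both $j^*$ and ${j^*}'$ induce equivalences $\mathcal{D}A/\Im i_*\xrightarrow{\sim}\mathcal{D}C$, so they differ by a triangle auto-equivalence $\sigma$ of $\mathcal{D}C$, and since $\sigma$ is fully faithful and commutes with shifts, the eventually homological isomorphism property transports directly with the same threshold. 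This is more elementary and exhibits the transported property as a formal consequence of the definition of equivalent recollements rather than of the structural characterization in Theorem~\ref{theorem-main-1}; the paper's route, while heavier, has the side benefit of establishing the extension lemmas it reuses later (e.g.\ in Lemma~\ref{lemma-ehi-5rec}). One small point worth making explicit: that $j^*$ induces an equivalence on the Verdier quotient $\mathcal{D}A/\Im i_*$ is a standard consequence of the recollement axioms, but it should be cited rather than left implicit.
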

\begin{proof}
Since $j^*$ is an
eventually homological isomorphism, it follows from
Theorem~\ref{theorem-main-1} that $\gl B<\infty$, and $i_*$ restricts to both $K^b(\proj)$ and $K^b(\inj)$.
Therefore, the recollement
($\mathcal{D} B$,\ $\mathcal{D} A$,\ $\mathcal{D} C$,\ $i^*,i_*,i^!,j_!,j^*,j_*$)
can be extended one-step upwards and one-step downwards,
see \cite[Lemma 3 and Lemma 4]{QH16}.
On the other hand, it follows from \cite[Proposition 3 and Remark 1]{Han14}
that ($\mathcal{D} B$,\ $\mathcal{D} A$,\ $\mathcal{D} C$,\ $i^*,i_*,i^!,j_!,j^*,j_*$)
is equivalent to a standard recollement ($\mathcal{D} B$,\ $\mathcal{D} A$,
$\mathcal{D} C$, ${i^*}',i_*',{i^!}',j_!',
{j^*}',j_*'$), which can also be extended one-step upwards and one-step downwards.
Therefore, $i_*'$ restricts to both $K^b(\proj)$ and $K^b(\inj)$.
Using Theorem~\ref{theorem-main-1} again, we obtain that the functor ${j^*}'$ is an
eventually homological isomorphism.
\end{proof}

Owing to Corollary~\ref{cor-ehi-stand}, we will restrict
our discussions on standard recollement in the following text.

\begin{theorem}\label{theorem-main-2}
Let $A$, $B$ and $C$ be finite dimensional
algebras over a field $k$, and let
($\mathcal{D} B$,\ $\mathcal{D} A$,\ $\mathcal{D} C$,\ $i^*,i_*,i^!,j_!,j^*,j_*$)
be a standard recollement defined
by $X \in D^b(C^{op} \otimes A)$ and $Y \in D^b(A^{op} \otimes B)$.
Suppose $X^*=\RHom_A(X,A)$ and $Y^*=\RHom_B(Y,B)$.
Then the following are equivalent:

{\rm (a)} The functor $j^*$ is an eventually homological isomorphism;

{\rm (b)} $\gl B<\infty$, $_AY\in K^b(\proj A^{op})$ and $Y_A^*\in K^b(\proj A)$;

{\rm (b')} $\gl B<\infty$, $_CX\in K^b(\proj C^{op})$ and $X_C^*\in K^b(\proj C)$.

Moreover, if $k$ is algebraically
closed, these occur precisely when

{\rm (c)} $\RHom_B(Y,Y)\in K^b(\proj A^e)$, where $A^e=A^{op}\otimes_kA$.
\end{theorem}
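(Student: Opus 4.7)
The plan is to deduce Theorem~\ref{theorem-main-2} from Theorem~\ref{theorem-main-1} together with Proposition~\ref{prpo-functor}. The equivalence $(a) \Leftrightarrow (b) \Leftrightarrow (b')$ is a direct translation: since $i_*B = \RHom_B(Y,B) = Y^*$ and $i_*(DB) = \RHom_B(Y,DB) \cong DY$, a standard thick-subcategory argument turns the conditions ``$i_*$ restricts to $K^b(\proj)$'' and ``$i_*$ restricts to $K^b(\inj)$'' of Theorem~\ref{theorem-main-1} into, respectively, $Y_A^* \in K^b(\proj A)$ and $DY \in K^b(\inj A)$ (equivalently $_AY \in K^b(\proj A^{op})$). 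This yields $(a) \Leftrightarrow (b)$; the same reasoning with $X$ in place of $Y$ gives $(a) \Leftrightarrow (b')$.

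The heart of the proof is $(b) \Leftrightarrow (c)$, which uses algebraic closedness of $k$ in the form that the indecomposable projective $A^{op} \otimes B$-modules are exactly the bimodules $Ae \otimes_k fB$; consequently, the tensor product over $k$ of a projective left-$A$-module with a perfect right-$B$-module is perfect as a bimodule, and analogous statements hold for $A^e$ and $B^e$.

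For $(b) \Rightarrow (c)$, $\gl B < \infty$ plus algebraic closedness gives $B \in K^b(\proj B^e)$, and I fix a projective-bimodule resolution $B \simeq P$. Then $Y \simeq Y \otimes_B P$ has terms of the form $Yf_k \otimes_k f_lB$; resolving each $Yf_k$ (a summand of $_AY \in K^b(\proj A^{op})$) by a bounded left-$A$-projective complex and totalling yields a bimodule-projective model $Y \simeq Q$ with $Y \in K^b(\proj(A^{op}\otimes B))$. Since $Y_B$ is automatically perfect, $\RHom_B(Y,Y) \cong Y \otimes^L_B Y^* \cong Q \otimes_B Y^*$, whose terms $Ae_k \otimes_k f_lY^*$ are perfect $A^e$-modules because each $f_lY^*$ is a summand of the perfect right-$A$-module $Y_A^*$; the total complex then lies in $K^b(\proj A^e)$, giving (c).

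For $(c) \Rightarrow (b)$, the key trick is to apply $-\otimes^L_A Y = i^*$ to the recollement triangle $j_!j^*A \to A \to i_*i^*A \to$ in $\mathcal{D}(A^e)$: the left term is killed by $i^*j_! = 0$, producing $Y \cong \RHom_B(Y,Y) \otimes^L_A Y$ in $\mathcal{D}(A^{op}\otimes B)$. Expanding the right side via a projective $A^e$-resolution of $\RHom_B(Y,Y)$ (whose terms contract with $Y$ to give perfect bimodules, since $Y_B$ is always perfect) yields $Y \in K^b(\proj(A^{op}\otimes B))$, hence $_AY \in K^b(\proj A^{op})$. Restricting (c) to the right-$A$ structure gives $i_*Y = \RHom_B(Y,Y) \in K^b(\proj A)$, and thick propagation through $\thick_B(Y) = K^b(\proj B)$ (which holds because $i^*$ is a Verdier quotient preserving compacts) upgrades this to $Y^* = i_*B \in K^b(\proj A)$. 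For $\gl B < \infty$, I use the identification $B \cong Y^* \otimes^L_A Y$ in $\mathcal{D}(B^e)$ (coming from $i^*i_* = \id$) and the model $Y \simeq Q$: the terms become $Y^*e_k \otimes_k f_lB$, which are perfect $B^e$-modules because $_BY^* \in K^b(\proj B^{op})$ is automatic from $Y_B$ being perfect, so $B \in K^b(\proj B^e)$ and hence $\gl B < \infty$ by smoothness over algebraically closed $k$. The main obstacle throughout is the systematic bookkeeping of which of the four module structures (left $A$, right $A$, left $B$, right $B$) is being used and preserved at each step; the rest is a mechanical chase through Proposition~\ref{prpo-functor} and the recollement triangle.
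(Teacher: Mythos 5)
Your proposal is correct and follows essentially the same route as the paper's proof: deduce $(a)\Leftrightarrow(b)\Leftrightarrow(b')$ from Theorem~\ref{theorem-main-1} by translating the restriction properties of $i_*$ (resp. $j^*$) into perfectness of $Y^*$, $DY$ (resp. $X^*$, $DX$), and then establish $(b)\Leftrightarrow(c)$ via the induced bimodule-level recollements
$\bigl(\mathcal{D}(B^e),\,\mathcal{D}(A^{\op}\otimes B),\,\mathcal{D}(C^{\op}\otimes B)\bigr)$
and
$\bigl(\mathcal{D}(A^{\op}\otimes B),\,\mathcal{D}(A^e),\,\mathcal{D}(A^{\op}\otimes C)\bigr)$
from Han's Theorems~1 and~2, together with Rouquier's characterization of finite global dimension as $B\in K^b(\proj B^e)$ over an algebraically closed field, and the d\'evissage $\thick(Y_B)=K^b(\proj B)$ to upgrade $\RHom_B(Y,Y)\in K^b(\proj A)$ to $Y^*_A\in K^b(\proj A)$.

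The only deviations are cosmetic. Where the paper cites abstract facts — e.g.\ that $F_1=Y\otimes_B^L-$ and $F_2=-\otimes_B^LY^*$ preserve compactness, that $L_1,L_2$ preserve compactness, and (for $(c)\Rightarrow(b)$) \cite[Lemma~4.2]{AKLY17} saying the embedding $F_2$ reflects compactness — you unfold the same content by hand: you show compactness by resolving term-by-term using that indecomposable projective bimodules split as outer tensor products $Ae\otimes_k fB$, and you replace the lemma about $F_2$ reflecting compactness by the equivalent identity $I^*\cong I^*I_*I^*$ (obtained from applying $I^*=-\otimes_A^LY$ to the canonical triangle and using $I^*J_!=0$), giving $Y\cong\RHom_B(Y,Y)\otimes_A^LY$, after which $I^*$ preserving compacts finishes. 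Likewise your derivation of $\gl B<\infty$ from $B\cong Y^*\otimes_A^L Y=L_1F_1B$, with $F_1B=Y$ already shown compact, is the same computation the paper packages as $B\cong L_1L_2F_2F_1B$. A word of caution: the ``resolve each $Yf_k$ and totalize'' steps are fine in spirit but, as written, conflate replacing a single bimodule by a resolution with replacing a whole complex; the clean way to say it (and what you in effect are doing) is that $F_1$ preserves compacts because $F_1(B^e)\cong Y\otimes_kB$ is compact exactly when $_AY$ is perfect, and similarly for $F_2$ with $Y^*_A$. With that phrasing your argument coincides with the paper's.
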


\begin{proof}
(a)$\Leftrightarrow$ (b): It follows from \cite[Lemma 2.8]{AKLY17} that
$_AY\in K^b(\proj A^{op})$ if and only if $i^*\cong -\otimes^L_A Y$
restricts to $D^b(\mod)$, and this occurs precisely when
$i_*$ restricts to $K^b(\inj )$, see
\cite[Lemma 4]{QH16}. By \cite[Lemma 2.5]{AKLY17}
and Proposition~\ref{prpo-functor},
$Y_A^*\in K^b(\proj A)$ if and only if $i_*\cong -\otimes^L_B Y^*$ restricts to $K^b(\proj )$.
Now the statement follows from Theorem~\ref{theorem-main-1}.

\medskip

(a)$\Leftrightarrow$ (b'):
As above, we obtain that $_CX\in K^b(\proj C^{op})$
if and only if
$j^*$ restricts to $K^b(\inj )$, and $X_C^*\in K^b(\proj C)$
if and only if $j^*$ restricts to $K^b(\proj )$.
Thus, the statement follows from Theorem~\ref{theorem-main-1}.

\medskip

(b)$\Leftrightarrow$ (c):
According to \cite[Theorem 1]{Han14} and \cite[Theorem 2]{Han14}, a
recollement of derived categories of algebras induces those of
tensor product algebras and opposite algebras respectively. Therefore, we
have the following two recollements:
$$\xymatrix @R=0.6in @C=0.8in{
\mathcal{D}(B^e)
\ar[r]|{F_1} & \mathcal{D}(A^{\op} \otimes _kB) \ar@<+3ex>[l]
\ar@<-3ex>[l]|{L_1} \ar[r] & \mathcal{D}(C^{\op} \otimes _kB)
\ar@<+3ex>[l] \ar@<-3ex>[l] }$$

$$\xymatrix @R=0.6in @C=0.8in{
\mathcal{D}(A^{\op} \otimes_k B)
\ar[r]|{F_2} & \mathcal{D}(A^e) \ar@<+3ex>[l]
\ar@<-3ex>[l]|{L_2} \ar[r] & \mathcal{D}(A^{\op} \otimes _kC),
\ar@<+3ex>[l] \ar@<-3ex>[l] }$$
where $L_1
\cong Y^{*} \otimes_A^L -$, $F_1 \cong Y \otimes_B^L -$, $L_2
\cong - \otimes_A^L Y$, $F_2 \cong - \otimes_B^L Y^{*}$.

Now we claim (b)$\Rightarrow$ (c). Since $B$ is a finite dimensional
algebra over an algebraically
closed field, the condition $\gl B<\infty$ is equivalent to
$B\in K^b(\proj B^{e})$ (Ref. \cite[Lemma 7.2]{Rou08}).
On the other hand, $_AY\in K^b(\proj A^{op})$ and $Y_A^*\in K^b(\proj A)$
implies that both $F_1$ and $F_2$ preserve compactness.
Therefore, $F_2F_1(B)\cong Y\otimes _B^LY^*\cong\RHom_B(Y,Y)\in K^b(\proj A^e)$.

Next, we prove (c)$\Rightarrow$ (b).
Because $F_2F_1(B)\cong\RHom_B(Y,Y)\in K^b(\proj A^e)$ and both
$L_1$ and $L_2$ preserve compactness (Ref. \cite[Lemma 2.9 (e)]{AKLY17}),
we have that $B\cong L_1F_1B\cong L_1L_2F_2F_1B\in K^b(\proj B^e)$.
By \cite[Lemma 7.2]{Rou08}, we get $\gl B<\infty$.
Due to \cite[Lemma 4.2]{AKLY17}, $F_2F_1(B)\in K^b(\proj A^e)$ yields
that $F_1(B)\in K^b(\proj (A^{\op} \otimes_k B))$, that is,
$_AY_B\in K^b(\proj (A^{\op} \otimes_k B))$. Therefore, we get $_AY\in K^b(\proj A^{op})$.
Since $i^*A=Y_B$ is a compact generator of $\mathcal{D}B$, it follows
that $\thick Y_B=\thick B$, where $\thick Y_B$ is the smallest
triangulated subcategory of $\mathcal{D}B$ containing $Y_B$
and closed under direct summands.
By d\'{e}vissage, $\RHom_B(_AY_B,Y_B)\in K^b(\proj A)$ yields that
$\RHom_B(_AY_B,B)\in K^b(\proj A)$, that is, $Y_A^*\in K^b(\proj A)$.

\end{proof}

\section{\large Proof of Theorem B}
\indent\indent In this section, we will compare the Gorensteinness,
singularity categories and the Fg condition of the algebras
$A$ and $C$, where there is a recollement ($\mathcal{D} B$,\ $\mathcal{D} A$,\ $\mathcal{D} C$)
such that the functor $j^*:\mathcal{D}A\rightarrow\mathcal{D}C$ is an
eventually homological isomorphism.
\subsection{\large Comparison on Gorensteinness}

\indent\indent Recall that a finite dimensional algebra $A$ is said
to be {\it Gorenstein} if $\id_AA < \infty $ and $\id_{A^{op}}A < \infty$.

\begin{definition}{\rm (\cite{QH16})
Let $\mathcal{T}_1$, $\mathcal{T}$ and $\mathcal{T}_2$ be
triangulated categories, and $n$ a positive integer. An {\it
$n$-recollement} of $\mathcal{T}$ relative to $\mathcal{T}_1$ and
$\mathcal{T}_2$ is given by $n+2$ layers of triangle functors
$$\xymatrix@!=4pc{ \mathcal{T}_1 \ar@<+1ex>[r] \ar@<-3ex>[r]_\vdots & \mathcal{T}
\ar@<+1ex>[r]\ar@<-3ex>[r]_\vdots \ar@<-3ex>[l] \ar@<+1ex>[l] &
\mathcal{T}_2 \ar@<-3ex>[l] \ar@<+1ex>[l]}$$ such that every
consecutive three layers form a recollement.}
\end{definition}

We mention that the ideal of this definition comes from \cite{BGS88, AKLY17},
where the concept ``ladder'' was introduced to study mixed categories.
In terms of $n$-recollement, the relationship between recollement
of derived categories and the Gorensteinness of algebras are expressed
as follows.
\begin{proposition}{\rm (See \cite[Theorem III]{QH16})}\label{prop-nreco-Gor}
Let $A$, $B$ and $C$ be finite dimensional algebras, and $\mathcal{D} A$ admit an
$n$-recollement relative to $\mathcal{D} B$ and $\mathcal{D} C$.

{\rm (1)} $n = 3$: if $A$ is Gorenstein then so are $B$ and $C$;

{\rm (2)} $n \geq 4$: $A$ is Gorenstein if and only if so are $B$ and $C$.

\end{proposition}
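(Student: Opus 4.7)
The proof strategy rests on two ingredients: (i) the standard characterization that $A$ is Gorenstein if and only if $K^b(\proj A)=K^b(\inj A)$ as subcategories of $D^b(\mod A)$ (equivalently, $\id_A A<\infty$ and $\id_{A^{\op}}A<\infty$); and (ii) a precise dictionary, obtained by iterated application of \cite[Lemma 3 and Lemma 4]{QH16} together with \cite[Lemma 2.5 and Lemma 4.3]{AKLY17}, between the number of layers in an $n$-recollement and the restriction properties of the six functors $i^*,i_*,i^!,j_!,j^*,j_*$ to the subcategories $K^b(\proj)$ and $K^b(\inj)$. Specifically, $i^*,j_!$ always restrict to $K^b(\proj)$ and $i^!,j_*$ always restrict to $K^b(\inj)$; for $n\geq 3$ both of the middle functors $i_*$ and $j^*$ restrict to $K^b(\proj)$ and to $K^b(\inj)$; and for $n\geq 4$ the outer functors acquire the missing restrictions, so that, in particular, $j_!$ restricts also to $K^b(\inj)$ and $i^!$ to $K^b(\proj)$.

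For part (1), assume $A$ is Gorenstein and $n=3$. Given $P\in K^b(\proj B)$, the restriction of $i_*$ to $K^b(\proj)$ yields $i_*P\in K^b(\proj A)=K^b(\inj A)$, and combining with the automatic restriction of $i^!$ to $K^b(\inj)$ and $i^!i_*\cong\id$ gives $P\in K^b(\inj B)$. The reverse inclusion is dual: for $I\in K^b(\inj B)$, $i_*I\in K^b(\inj A)=K^b(\proj A)$, and $i^*i_*\cong\id$ together with the automatic restriction of $i^*$ to $K^b(\proj)$ gives $I\in K^b(\proj B)$. Hence $K^b(\proj B)=K^b(\inj B)$ and $B$ is Gorenstein; the completely parallel argument with the triple $(j_!,j^*,j_*)$, using $j^*j_!\cong\id$ and $j^*j_*\cong\id$, handles $C$.

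For the converse direction in (2), assume $n\geq 4$ and that $B, C$ are Gorenstein, and consider the recollement triangle $j_!j^*A\to A\to i_*i^*A\to$. The automatic restriction of $i^*$ to $K^b(\proj)$ gives $i^*A\in K^b(\proj B)=K^b(\inj B)$ by Gorensteinness of $B$, and the restriction of $i_*$ to $K^b(\inj)$ (available for $n\geq 3$) yields $i_*i^*A\in K^b(\inj A)$. Similarly $j^*A\in K^b(\proj C)=K^b(\inj C)$, and the restriction of $j_!$ to $K^b(\inj)$ (newly available for $n\geq 4$) gives $j_!j^*A\in K^b(\inj A)$, so the triangle forces $A\in K^b(\inj A)$, i.e.\ $\id_A A<\infty$. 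Applying the same argument to the induced recollement of the opposite algebras $A^{\op},B^{\op},C^{\op}$ from \cite[Theorem 2]{Han14} yields $\id_{A^{\op}}A<\infty$, whence $A$ is Gorenstein. The main obstacle I anticipate is setting up the restriction dictionary in (ii) cleanly, verifying for each of the six functors and each $n$ exactly which of $K^b(\proj)$ and $K^b(\inj)$ it restricts to; once this bookkeeping is complete, each implication reduces to a single application of a recollement triangle together with a unit or counit isomorphism.
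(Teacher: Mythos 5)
The paper does not give its own proof of this proposition—it cites \cite[Theorem III]{QH16} directly—so I can only assess the internal correctness of your reconstruction, which is essentially the standard argument for this kind of statement and is, in the main, sound. Your key observation, that Gorensteinness of a finite dimensional algebra is equivalent to the equality $K^b(\proj)=K^b(\inj)$ inside $D^b(\mod)$, combined with the restriction dictionary and the unit/counit isomorphisms $i^*i_*\cong\id\cong i^!i_*$, $j^*j_!\cong\id\cong j^*j_*$, gives exactly part (1). Your part (2) correctly reduces the forward implication to part (1) and handles the converse via a recollement triangle and the induced opposite recollement.

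One point in your ``dictionary'' is overstated and deserves a correction, even though it does not affect the validity of your argument. You write that for $n\geq 4$ \emph{both} outer functors acquire the missing restrictions, so that $j_!$ restricts to $K^b(\inj)$ \emph{and} $i^!$ restricts to $K^b(\proj)$. Label the $n+2$ layers of the $n$-recollement $L_1,\dots,L_{n+2}$; a layer $L_m$ restricts to $K^b(\proj)$ precisely when it is the top of some recollement (i.e.\ $m\leq n$) and to $K^b(\inj)$ precisely when it is the bottom of one (i.e.\ $m\geq 3$). If you choose the consecutive triple $(L_3,L_4,L_5)$ as your standard recollement, then $j_!\in L_3$ indeed restricts to $K^b(\inj)$, but $i^!\in L_5$ restricts to $K^b(\proj)$ only when $L_7$ exists, i.e.\ $n\geq 5$. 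With $n=4$ you can get one or the other but not both simultaneously with the same choice of triple. Fortunately your converse argument uses only $i^*,i_*,j_!,j^*$ (and, after passing to the opposite recollement, their opposite analogues), so the claim about $i^!$ is unused and the proof stands. It would be worth stating explicitly which consecutive triple of layers you are taking as ``the'' recollement at each stage, since the restriction properties of $i_*,j^*$ (and, for the converse, of $j_!$) are not intrinsic to the $n$-recollement but depend on that choice; with the explicit choice $(L_3,L_4,L_5)$ for the converse, every step you wrote is justified.
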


\begin{lemma}\label{lemma-ehi-5rec}
Let $A$, $B$ and $C$ be finite dimensional
algebras over a field $k$, and let
($\mathcal{D} B$,\ $\mathcal{D} A$,\ $\mathcal{D} C$,\ $i^*,i_*,i^!,j_!,j^*,j_*$)
be a recollement such that the functor $j^*$ is an eventually homological isomorphism.
Then this recollement can be extended to a $5$-recollement
of $\mathcal{D} A$ relative to $\mathcal{D} B$ and $\mathcal{D} C$.
\end{lemma}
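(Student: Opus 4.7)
The plan is to extract the structural consequences of the hypothesis via Theorem~\ref{theorem-main-1} and Theorem~\ref{theorem-main-2}, and then iterate the one-step extension results of \cite{QH16} twice in each direction.

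First I would apply Theorem~\ref{theorem-main-1} to the given recollement: since $j^*$ is an eventually homological isomorphism, we obtain $\gl B<\infty$ and, crucially, that $i_*$ restricts to both $K^b(\proj)$ and $K^b(\inj)$. By \cite[Lemma 3 and Lemma 4]{QH16} these two restriction conditions produce, respectively, the one-step downward and one-step upward extensions of the recollement. This already yields a $3$-recollement (i.e., $5$ layers), by exactly the argument appearing in the proof of Corollary~\ref{cor-ehi-stand}.

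To reach a $5$-recollement (i.e., $7$ layers), two further extension steps (one up, one down) are needed. For this I would first invoke Corollary~\ref{cor-ehi-stand} to replace the recollement by an equivalent standard one defined by $Y\in D^b(A^{\op}\otimes B)$ and $X\in D^b(C^{\op}\otimes A)$. Theorem~\ref{theorem-main-2} then simultaneously supplies conditions (b) and (b'), namely $_AY\in K^b(\proj A^{\op})$, $Y_A^*\in K^b(\proj A)$, $_CX\in K^b(\proj C^{\op})$ and $X_C^*\in K^b(\proj C)$. In the standard recollement picture every functor (including the new adjoints introduced at the first extension step) is given by derived tensor product with, or $\RHom$ into, one of $X,X^*,Y,Y^*$ or their duals; the perfectness assertions above are exactly what ensures that the newly added layers again restrict to $K^b(\proj)$ and $K^b(\inj)$, so that \cite[Lemma 3 and Lemma 4]{QH16} can be applied a second time to produce the next layer up and the next layer down.

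The main obstacle I expect is the bookkeeping needed to verify that the functors added during the first extension step themselves satisfy the hypotheses of \cite[Lemma 3 and Lemma 4]{QH16} for the second extension. The key structural input here is the smoothness of $B$, namely $B\in K^b(\proj B^e)$, which follows from $\gl B<\infty$ via \cite[Lemma 7.2]{Rou08} and which, together with the biduality $\RHom_A(\RHom_A(Y,A),A)\simeq Y$ available because $Y$ is perfect from both sides, propagates the perfectness of $Y$ and $X$ through the dualizing functors. Once this is in place, the two further extension steps are formal, and the resulting diagram is the desired $5$-recollement.
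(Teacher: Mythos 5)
You get the first half right and in the same way as the paper: Theorem~\ref{theorem-main-1} gives $\gl B<\infty$ together with $i_*$ restricting to both $K^b(\proj)$ and $K^b(\inj)$, and the two lemmas from \cite{QH16} then turn the recollement into a $3$-recollement.

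For the second extension your plan diverges from the paper's and, as written, has a real gap. The paper does not pass to a standard recollement here, and it does not invoke perfectness of $X$, $X^*$, $Y$, $Y^*$ over $A$ or $C$ at all. The observation it uses instead is elementary and self-contained: because $\gl B<\infty$, the three subcategories $K^b(\proj B)$, $K^b(\inj B)$ and $D^b(\mod B)$ of $\mathcal{D}B$ coincide. Combined with \cite[Lemma 2.9(e)]{AKLY17}, which says $i^*$ and $i^!$ always restrict to $D^b(\mod)$, this gives for free that $i^*(DA)\in K^b(\inj B)$ and $i^!A\in K^b(\proj B)$, which are exactly the hypotheses of \cite[Lemma~3 and Lemma~4]{QH16} needed to push the $3$-recollement up and down one more step. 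In your proposal these two conditions are never actually established; you assert that the perfectness of $X,X^*,Y,Y^*$ together with smoothness of $B$ and a biduality $\RHom_A(\RHom_A(Y,A),A)\simeq Y$ ``propagates perfectness,'' but this sentence does not identify which restriction property of which new functor must be checked, and the biduality over $A$ is not the relevant fact (the constraint that matters is about the target $\mathcal{D}B$, not about duals over $A$). Replacing that paragraph by the one-line observation that $K^b(\proj B)=K^b(\inj B)=D^b(\mod B)$ when $\gl B<\infty$ closes the gap and, incidentally, makes the detour through Corollary~\ref{cor-ehi-stand} and Theorem~\ref{theorem-main-2} unnecessary.
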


\begin{proof}
Since $j^*$ is an
eventually homological isomorphism, it follows from
Theorem~\ref{theorem-main-1} that $\gl B<\infty$, and $i_*$ restricts to both $K^b(\proj)$ and $K^b(\inj)$.
Therefore, the recollement
($\mathcal{D} B$,\ $\mathcal{D} A$,\ $\mathcal{D} C$,\ $i^*,i_*,i^!,j_!,j^*,j_*$)
can be extended one-step upwards and one-step downwards,
see \cite[Lemma 3 and Lemma 4]{QH16}.
Thus, we obtain a $3$-recollement
$$\xymatrix@!=4pc{ \mathcal{D}B \ar[r]|{i_* } \ar@<+4ex>[r]
\ar@<-4ex>[r]
& \mathcal{D}A \ar@<-2ex>[l]|{i^* } \ar@<+4ex>[r]
\ar@<-4ex>[r]
\ar@<+2ex>[l]|{i^!} \ar[r]|{j^* } &
\mathcal{D}C \ar@<-2ex>[l]|{j_! }
\ar@<+2ex>[l]|{j_*} } \eqno {\rm (R)}.$$
Since $\gl B<\infty$, it follows from \cite[Lemma 2.9 (e)]{AKLY17} that $i^*(DA)
\in D^b(\mod B)= K^b(\inj B)$, and by \cite[Lemma 4]{QH16},
(R) can be extend one step upwards.
Similarly, we have $i^!A\in D^b(\mod B)= K^b(\proj B)$,
and thus, (R) can be extend one step downwards.
\end{proof}

Now we get the main result of this section.
\begin{theorem}\label{theorem-Gor}
Let $A$, $B$ and $C$ be finite dimensional algebras over
a field $k$, and let ($\mathcal{D} B$,\ $\mathcal{D} A$,\ $\mathcal{D} C$,
\ $i^*,i_*,i^!,j_!,j^*,j_*$)
be a recollement such that the functor $j^*$ is an
eventually homological isomorphism.
Then the algebra $A$ is Gorenstein if and only if so is $C$.
\end{theorem}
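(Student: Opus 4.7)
The plan is to reduce the statement to a direct application of Proposition~\ref{prop-nreco-Gor} combined with Lemma~\ref{lemma-ehi-5rec}. The idea is that the hypothesis on $j^*$ is strong enough to promote the given recollement all the way to a $5$-recollement, which in particular qualifies as an $n$-recollement with $n\geq 4$; the latter condition is exactly what makes Gorensteinness transfer in both directions.

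First I would invoke Lemma~\ref{lemma-ehi-5rec} directly, extending the given recollement $(\mathcal{D}B,\mathcal{D}A,\mathcal{D}C,\ i^*,i_*,i^!,j_!,j^*,j_*)$ to a $5$-recollement of $\mathcal{D}A$ relative to $\mathcal{D}B$ and $\mathcal{D}C$. Since $5\geq 4$, part (2) of Proposition~\ref{prop-nreco-Gor} applies, yielding the equivalence: $A$ is Gorenstein if and only if both $B$ and $C$ are Gorenstein.

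Next I would dispose of the auxiliary condition on $B$. By Theorem~\ref{theorem-main-1}, the assumption that $j^*$ is an eventually homological isomorphism forces $\gl B<\infty$. But any finite dimensional algebra of finite global dimension automatically satisfies $\id_BB<\infty$ and $\id_{B^{\op}}B<\infty$, hence is Gorenstein. Therefore, the condition ``$B$ is Gorenstein'' in the equivalence above is automatically fulfilled, and the equivalence collapses to ``$A$ is Gorenstein if and only if $C$ is Gorenstein'', which is exactly the claim.

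There is no serious obstacle here, since all the technical work has already been packaged into Lemma~\ref{lemma-ehi-5rec} and Proposition~\ref{prop-nreco-Gor}; the only thing to observe is that $\gl B<\infty$ implies $B$ Gorenstein so that $B$ drops out of the two-sided implication. The main content of the argument therefore lies in verifying that the hypotheses of Lemma~\ref{lemma-ehi-5rec} and Proposition~\ref{prop-nreco-Gor}(2) are met, both of which follow immediately from Theorem~\ref{theorem-main-1}.
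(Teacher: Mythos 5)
Your proposal is correct and follows exactly the same route as the paper: invoke Lemma~\ref{lemma-ehi-5rec} to upgrade to a $5$-recollement, apply Proposition~\ref{prop-nreco-Gor}(2), and use $\gl B<\infty$ from Theorem~\ref{theorem-main-1} to eliminate $B$ from the equivalence. No discrepancy.
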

\begin{proof}
According to Theorem~\ref{theorem-main-1}, we have $\gl B<\infty$,
and thus, $B$ is a Gorenstein algebra. Then it follows from
Lemma~\ref{lemma-ehi-5rec} and Proposition~\ref{prop-nreco-Gor}
that $A$ is Gorenstein if and only if so is $C$.
\end{proof}

\subsection{\large Comparison on singular categories}
\indent\indent Let $A$ be a finite dimensional algebra
over $k$. The {\it singularity category} \cite{Orl04} of $A$ is
defined to be the following Verdier quotient category: $$D_{sg}(A):= D^b(\mod A)/ K^b(\proj A).$$
Clearly, the singularity category $D_{sg}(A)$ carries a triangulated
structure, and $D_{sg}(A)=0$ if and only if $\gl A<\infty $.

From \cite{Chen14,Chen16}, two algebras are said to be {\it singularly equivalent} if there is
a triangle equivalent between $D_{sg}(A)$ and $D_{sg}(B)$.

\begin{proposition}{\rm (See \cite[Proposition 2.5]{LL15})}\label{prop-quot-rec}
Let ($\mathcal{D}_1$, $\mathcal{D}$, $\mathcal{D}_2$,\ $i^*,i_*,i^!,j_!,j^*,j_*$) be
a recollement of triangulated categories and $\mathcal{T}$ be a thick subcategory of $\mathcal{D}$.
Set $\mathcal{T}_1=i^*\mathcal{T}$ and $\mathcal{T}_2=j^*\mathcal{T}$.
If $i_*\mathcal{T}_1 \subseteq \mathcal{T}$ and $j_*\mathcal{T}_2 \subseteq \mathcal{T}$,
then there exists an induced recollement of triangulated quotient categories
($\mathcal{D}_1/\mathcal{T}_1 $, $\mathcal{D}/\mathcal{T}$, $\mathcal{D}_2/\mathcal{T}_1$).
\end{proposition}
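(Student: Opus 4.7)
The plan is to produce the six induced functors via the universal property of the Verdier quotient and then to verify the four recollement axioms. First, I would check that $\mathcal{T}_1=i^*\mathcal{T}$ and $\mathcal{T}_2=j^*\mathcal{T}$ are indeed thick subcategories of $\mathcal{D}_1$ and $\mathcal{D}_2$; they are triangulated because $i^*$ and $j^*$ are triangle functors, and closure under direct summands uses the hypothesis together with $i^*i_*\cong\id$ and $j^*j_!\cong\id$. For instance, if $M\oplus M'\in\mathcal{T}_1$, then $i_*(M\oplus M')\in i_*\mathcal{T}_1\subseteq\mathcal{T}$, hence $i_*M\in\mathcal{T}$ by thickness of $\mathcal{T}$, and so $M\cong i^*i_*M\in i^*\mathcal{T}=\mathcal{T}_1$.

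Next, I would upgrade the hypothesis to cover the remaining two functors using the structural triangles of the recollement. Applying the triangle $i_!i^!X\to X\to j_*j^*X\to$ to $X\in\mathcal{T}$ and invoking $j_*\mathcal{T}_2\subseteq\mathcal{T}$ forces $i_*i^!X\in\mathcal{T}$, hence $i^!X\cong i^*i_*i^!X\in\mathcal{T}_1$; dually, for $Y=j^*X\in\mathcal{T}_2$ the triangle $j_!j^*X\to X\to i_*i^*X\to$ together with $i_*\mathcal{T}_1\subseteq\mathcal{T}$ yields $j_!Y\in\mathcal{T}$. Thus each of $i^*,i_*,i^!,j_!,j^*,j_*$ sends its prescribed thick subcategory into the next, and by the universal property of Verdier localization each descends to a triangle functor $\bar{i}^*,\bar{i}_*,\bar{i}^!,\bar{j}_!,\bar{j}^*,\bar{j}_*$ among $\mathcal{D}_1/\mathcal{T}_1$, $\mathcal{D}/\mathcal{T}$ and $\mathcal{D}_2/\mathcal{T}_2$.

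Finally, I would verify the four recollement axioms. Axiom (R3) is immediate from $j^*i_*=0$ and the universality of localization; axiom (R4) is obtained by applying the triangle localization $\mathcal{D}\to\mathcal{D}/\mathcal{T}$ to the structural triangles of the original recollement; axiom (R1) follows because the original units and counits descend to natural transformations between the induced functors and still satisfy the triangle identities on the quotient. The main obstacle is axiom (R2), the full faithfulness of $\bar{i}_*$, $\bar{j}_!$ and $\bar{j}_*$. For this I would compute Hom sets in the quotient via the calculus of fractions: given $X,X'\in\mathcal{D}_1$ and a roof $i_*X\xleftarrow{s}Z\to i_*X'$ in $\mathcal{D}/\mathcal{T}$ with $\mathrm{cone}(s)\in\mathcal{T}$, applying $i^*$ yields a roof $X\xleftarrow{i^*s}i^*Z\to X'$ whose cone lies in $i^*\mathcal{T}=\mathcal{T}_1$, and the isomorphism $\bar{i}^*\bar{i}_*\cong\id$ then provides a two-sided inverse on Hom sets. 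The arguments for $\bar{j}_!$ and $\bar{j}_*$ are entirely analogous via $\bar{j}^*\bar{j}_!\cong\id\cong\bar{j}^*\bar{j}_*$.
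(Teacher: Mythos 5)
The paper itself offers no proof of this proposition; it simply cites \cite[Proposition 2.5]{LL15}, so there is no internal argument to compare against. Your proposal supplies a self-contained proof that is correct in outline, but one step is stated too loosely. You claim that $\mathcal{T}_1=i^*\mathcal{T}$ and $\mathcal{T}_2=j^*\mathcal{T}$ ``are triangulated because $i^*$ and $j^*$ are triangle functors.'' That is not a valid reason: the image of a thick subcategory under a triangle functor is closed under shift, but it need not be closed under cones, since a morphism between two objects of the image need not lift to a morphism in $\mathcal{T}$. The same hypothesis you already use for direct summands rescues this: since $i^*i_*\cong\id$ and $i_*\mathcal{T}_1\subseteq\mathcal{T}$, one has $\mathcal{T}_1=\{M\in\mathcal{D}_1: i_*M\in\mathcal{T}\}=i_*^{-1}(\mathcal{T})$, the preimage of a thick subcategory under a triangle functor, hence thick (triangulated and closed under summands in one step). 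Similarly $\mathcal{T}_2=j_*^{-1}(\mathcal{T})$ using $j^*j_*\cong\id$ and $j_*\mathcal{T}_2\subseteq\mathcal{T}$; note that here you invoked $j^*j_!\cong\id$, which holds but is not the identity aligned with the hypothesis on $j_*$.

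The remainder of your argument is sound: deriving $i^!\mathcal{T}\subseteq\mathcal{T}_1$ and $j_!\mathcal{T}_2\subseteq\mathcal{T}$ from the recollement triangles, descending all six functors through the Verdier localizations, and checking (R1), (R3), (R4). For (R2) your roof computation can be made to work, but it is cleaner to finish with the adjunction machinery you already set up for (R1): the counit $\epsilon\colon i^*i_*\to\id$ (resp.\ the counit of $(j^*,j_*)$ and the unit of $(j_!,j^*)$) is a natural isomorphism because $i_*$ (resp.\ $j_*$, $j_!$) is fully faithful; after localizing, the induced unit and counit remain isomorphisms, and an adjoint is fully faithful exactly when its unit or counit is an isomorphism, which closes (R2) without any fraction calculus.
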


\begin{proposition}\label{prop-sing-rec}
Let $A$, $B$ and $C$ be finite dimensional algebras, and $\mathcal{D} A$ admit a
$4$-recollement relative to $\mathcal{D} B$ and $\mathcal{D} C$.
Then there exists an induced recollement of singularity categories
($D_{sg}(C)$, $D_{sg}(A)$, $D_{sg}(B)$).
\end{proposition}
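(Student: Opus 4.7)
The plan is to apply Proposition~\ref{prop-quot-rec} with the thick subcategory $\mathcal{T} := K^b(\proj A)$ of $D^b(\mod A)$. Since the induced recollement of singularity categories is to have $D_{sg}(C)$ on the left and $D_{sg}(B)$ on the right---the reverse of the positions of $B$ and $C$ in the original---I first extract from the $4$-recollement the shifted recollement $(\mathcal{D}C, \mathcal{D}A, \mathcal{D}B)$ formed by rows $2$--$4$ of the ambient ladder. In this shifted recollement the fully faithful embedding $\mathcal{D}C\hookrightarrow\mathcal{D}A$ is the original $j_*$ and the embedding $\mathcal{D}B\hookrightarrow\mathcal{D}A$ is $i_*$; in particular $\tilde{i}^*=j^*$, $\tilde{i}_*=j_*$, $\tilde{j}_!=i_*$ and $\tilde{j}^*=i^!$.

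Next I observe that the $4$-recollement hypothesis gives every functor in the ambient ladder both adjoints within the ladder. By the standard criteria of \cite[Lemmas 2.5 and 2.9]{AKLY17}, each such functor then restricts to $K^b(\proj)$, to $K^b(\inj)$ and hence to $D^b(\mod)$. In particular the shifted recollement restricts to a recollement $(D^b(\mod C), D^b(\mod A), D^b(\mod B))$, and both the original functors and those of the shifted recollement send $K^b(\proj)$ into $K^b(\proj)$.

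Now set $\mathcal{T}_1 := \tilde{i}^*\mathcal{T} = j^*K^b(\proj A)$ and $\mathcal{T}_2 := \tilde{j}^*\mathcal{T} = i^!K^b(\proj A)$. Restriction of $j^*$ and $i^!$ to $K^b(\proj)$ gives $\mathcal{T}_1\subseteq K^b(\proj C)$ and $\mathcal{T}_2\subseteq K^b(\proj B)$. For the reverse inclusions I use the identities $j^*j_!=\id_{\mathcal{D}C}$ and $i^!i_*=\id_{\mathcal{D}B}$, which hold because $j_!$ and $i_*$ are fully faithful with right adjoints $j^*$ and $i^!$ respectively; since $j_!C$ and $i_*B$ lie in $K^b(\proj A)$, the generators $C$ and $B$ belong to $\mathcal{T}_1$ and $\mathcal{T}_2$, and so $\mathcal{T}_1=K^b(\proj C)$ and $\mathcal{T}_2=K^b(\proj B)$. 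The inclusion hypotheses $\tilde{i}_*\mathcal{T}_1\subseteq\mathcal{T}$ and $\tilde{j}_*\mathcal{T}_2\subseteq\mathcal{T}$ of Proposition~\ref{prop-quot-rec} then reduce to $j_*K^b(\proj C)\subseteq K^b(\proj A)$ and $\tilde{j}_*K^b(\proj B)\subseteq K^b(\proj A)$, and both follow from the restriction statement of the previous paragraph.

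Proposition~\ref{prop-quot-rec} now produces a recollement of the corresponding Verdier quotients, which, by the identifications above, is precisely $(D_{sg}(C), D_{sg}(A), D_{sg}(B))$. The step I expect to be most delicate is the opening one: extracting the shifted recollement and invoking the $4$-recollement hypothesis to secure simultaneous restrictions of all functors to $K^b(\proj)$ and $K^b(\inj)$; once these are in place, the identification of $\mathcal{T}_1$ and $\mathcal{T}_2$ with the full perfect derived categories follows immediately from the section identities $j^*j_!=\id$ and $i^!i_*=\id$, and the application of Proposition~\ref{prop-quot-rec} is formal.
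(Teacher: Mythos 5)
Your proof is correct and takes essentially the same route as the paper: pass to the shifted recollement $(\mathcal{D}C,\mathcal{D}A,\mathcal{D}B)$ sitting in the middle of the ladder, use the $4$-recollement hypothesis (via the lemmas of \cite{AKLY17}) to restrict it to $D^b(\mod)$ and to check that the relevant functors preserve $K^b(\proj)$, and then apply Proposition~\ref{prop-quot-rec} with $\mathcal{T}=K^b(\proj A)$. One small imprecision worth flagging: it is not literally true that \emph{every} functor in the six-layer ladder of a $4$-recollement has both adjoints inside the ladder --- the two extremal layers each lack one --- but the rows you actually invoke (rows $1$--$5$ for restriction to $K^b(\proj)$, rows $2$--$4$ for restriction to $D^b(\mod)$) do have the adjoints required, so the argument is unaffected; the paper sidesteps this by citing \cite[Lemma 2.9(e)]{AKLY17} directly for the $D^b(\mod)$-level recollement, and your appeal to $j^*j_!\cong\id$ and $i^!i_*\cong\id$ makes explicit the equality $\mathcal{T}_1=K^b(\proj C)$, $\mathcal{T}_2=K^b(\proj B)$ that the paper states without comment.
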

\begin{proof}
Let $$\xymatrix@!=6pc{ \mathcal{D}B \ar@<2.4ex>[r]|{j_1} \ar@<-4ex>[r]
\ar@<-0.8ex>[r]|{j_3} & \mathcal{D}A \ar@<-4ex>[l]
\ar@<-0.8ex>[l]|{j_2} \ar@<+2.4ex>[l] \ar@<-0.8ex>[r]|{i_3}
\ar@<2.4ex>[r]|{i_1} \ar@<-4ex>[r] & \mathcal{D}C \ar@<-4ex>[l]
\ar@<-0.8ex>[l]|{i_2} \ar@<2.4ex>[l]}$$ be a 4-recollement
of $\mathcal{D} A$ relative to $\mathcal{D} B$ and $\mathcal{D} C$.
By \cite[Lemma 2.9 (e)]{AKLY17}, this 4-recollement restricts to
the following recollement of $D^b(\mod)-$ level
$$\xymatrix @R=0.6in @C=0.8in{ D^b(\mod C) \ar[r]|{i_2} &  D^b(\mod A) \ar@<-1.6ex>[l]|{i_1}
\ar@<+1.6ex>[l]|{i_3} \ar[r]|{j_2} &  D^b(\mod B)
\ar@<-1.6ex>[l]|{j_1} \ar@<+1.6ex>[l]|{j_3}},$$
where all the sixes functors restrict to $K^b(\proj)$.
Therefore, we have that $i_1(K^b(\proj A))=K^b(\proj C)$, $j_2(K^b(\proj A))=K^b(\proj B)$,
$i_2(K^b(\proj C))\subseteq K^b(\proj A)$ and $j_3(K^b(\proj B))\subseteq K^b(\proj A)$.
Now the statement follows from Proposition~\ref{prop-quot-rec}.
\end{proof}

Now we get the main result of this section.
\begin{theorem}\label{theorem-sing}
Let $A$, $B$ and $C$ be finite dimensional algebras over
a field $k$, and let ($\mathcal{D} B$,\ $\mathcal{D} A$,\ $\mathcal{D} C$,
\ $i^*,i_*,i^!,j_!,j^*,j_*$)
be a recollement such that the functor $j^*$ is an
eventually homological isomorphism.
Then $j^*$ induces a singularly equivalent between $A$ and $C$.
\end{theorem}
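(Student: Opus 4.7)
The plan is to extend the given recollement to a $4$-recollement, descend to singularity categories via Proposition~\ref{prop-sing-rec}, and then collapse the induced recollement using the vanishing of $D_{sg}(B)$.

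First I would invoke Lemma~\ref{lemma-ehi-5rec} to promote the given recollement to a $5$-recollement of $\mathcal{D}A$ relative to $\mathcal{D}B$ and $\mathcal{D}C$; this is available because $j^*$ is an eventually homological isomorphism. In particular, this contains a $4$-recollement, to which Proposition~\ref{prop-sing-rec} applies. The outcome is an induced recollement of singularity categories $(D_{sg}(C),\,D_{sg}(A),\,D_{sg}(B))$ whose six functors arise by restricting the derived-level functors to $D^b(\mod)$ and then passing to the Verdier quotient by $K^b(\proj)$.

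Next, since $\gl B<\infty$ by Theorem~\ref{theorem-main-1}, we have $D_{sg}(B)=0$. In any recollement $(\mathcal{T}_1,\mathcal{T},\mathcal{T}_2)$ whose right-hand term vanishes, the localization triangle degenerates to an isomorphism $X\cong i_*i^*X$ for every $X\in\mathcal{T}$, which combined with axiom (R2) shows that the left embedding $\mathcal{T}_1\to\mathcal{T}$ is a triangle equivalence with quasi-inverse given by $i^*$ (equivalently $i^!$). Applied to our induced recollement, the embedding $D_{sg}(C)\to D_{sg}(A)$ is the singularity-category shadow of the derived full embeddings $\mathcal{D}C\to\mathcal{D}A$ from the original data (namely $j_!$ and $j_*$), while its adjoint $D_{sg}(A)\to D_{sg}(C)$ is the shadow of $j^*$, the common right adjoint of $j_!$ and left adjoint of $j_*$. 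Consequently $j^*$ descends to a triangle equivalence $D_{sg}(A)\simeq D_{sg}(C)$.

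The only step that goes beyond direct citation is the final identification: one must chase through the construction in Proposition~\ref{prop-sing-rec} to verify that the abstract ``$i_*$'' of the induced recollement really is the restriction of $j_!$ (and $j_*$), so that the corresponding ``$i^*$'' coincides with the singularity-level functor induced by $j^*$. This is a formal check, since adjunctions at the derived level descend to adjunctions at the singularity level, but it is the one place in the argument where one has to match labels rather than simply apply a previous result.
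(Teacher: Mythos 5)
Your argument is essentially the paper's: extend to a $5$-recollement via Lemma~\ref{lemma-ehi-5rec}, extract a $4$-recollement, apply Proposition~\ref{prop-sing-rec} to get a recollement of singularity categories, and collapse it using $D_{sg}(B)=0$. The only cosmetic difference is that you take the $4$-recollement chunk in which $\mathcal{D}B$ sits on the left (so $D_{sg}(B)$ ends up on the right of the induced recollement and you collapse via the left embedding), whereas the paper takes the chunk with $\mathcal{D}C$ on the left (so $D_{sg}(B)$ lands on the left and collapses via the quotient functor); both chunks exist in the $5$-recollement and both give the equivalence, and your closing remark correctly flags the one label-matching step that neither proof spells out in full.
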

\begin{proof}
According to the proof of Lemma~\ref{lemma-ehi-5rec}, there is a $4$-recollement
$$\xymatrix@!=6pc{ \mathcal{D}C \ar@<2.4ex>[r]|{j_!} \ar@<-4ex>[r]
\ar@<-0.8ex>[r]|{j_*} & \mathcal{D}A \ar@<-4ex>[l]
\ar@<-0.8ex>[l]|{j^*} \ar@<+2.4ex>[l] \ar@<-0.8ex>[r]|{i^!}
\ar@<2.4ex>[r]|{i^*} \ar@<-4ex>[r] & \mathcal{D}B \ar@<-4ex>[l]
\ar@<-0.8ex>[l]|{i_*} \ar@<2.4ex>[l]}.$$
From Proposition~\ref{prop-sing-rec}, there exists an induced recollement of singularity categories
($D_{sg}(B)$, $D_{sg}(A)$, $D_{sg}(C)$). On the other hand,
it follows from Theorem~\ref{theorem-main-1} that $\gl B<\infty$,
that is, $D_{sg}(B)=0$. Therefore, the functor $j^*$ induces a singularly equivalent between $A$ and $C$.
\end{proof}

\subsection{\large Comparison on Fg condition}
\indent\indent Let $A$ be a $k$-algebra and $X$ a complex
of $A$-module, then we define
$$\mathcal{E} _A^*(X) = \oplus _{n\in \mathbb{Z}}\Hom _{\mathcal{D}A} (X,X[n]).$$
Clearly, $\mathcal{E} _A^*(X)$ is a graded $k$-algebra with multiplication given by Yoneda product.
For some $d\in \mathbb{Z}$, we consider the graded ideals of the form
$$\mathcal{E} _A^{\geq d}(X) = \oplus _{n\geq d}\Hom _{\mathcal{D}A} (X,X[n]).$$
From \cite{CE56}, the Hochschild cohomology ring of $A$ is the extension ring $\HH ^*(A)
:= \mathcal{E} _{A^e}^*(A)$, where $A^e := A^{op} \otimes_k A$ is the enveloping algebra.
For convenience, we denote $\HH ^{\geq d}(A):= \mathcal{E} _{A^e}^{\geq d}(A) =
\oplus _{n\geq d}\Hom _{\mathcal{D}(A^e)} (A,A[n])$.

To describe the finite generation condition Fg, we first need to define a
$\HH ^*(A)$-module structure on $\mathcal{E} _A^*(X)$, for any complex $X$ in $D^b(A)$.
Indeed, this module structure is given by
the graded ring homomorphism $\varphi _X : \HH ^*(A) \rightarrow \mathcal{E} _A^*(X)$,
where $\varphi _X= X\otimes _A^L-$.

Support varieties for modules over artin algebras were defined by
Snashall and Solberg in \cite{SS04}, using the Hochschild cohomology ring.
In \cite{EHSST04}, Erdmann et al. introduced some finiteness
conditions (Fg1) and (Fg2) for an algebra $A$, which ensure
many results for support varieties over a group algebra also
hold for support varieties over a selfinjective algebra.
Later, these conditions were called
Fg and were studied by many authors \cite{KPS16, Nag11, PSS14, Ska16}.

\begin{definition}
Let $A$ be an algebra over a field $k$. We say that $A$ satisfies the Fg condition if the following is
true:

{\rm (Fg1)} The ring $\HH ^*(A)$ is noetherian.

{\rm (Fg2)} The $\HH ^*(A)$-module $\mathcal{E} _A^*(A/\rad A)$ is finitely generated.
\end{definition}

Nowadays, the Fg condition is becoming an important property
in geometry and representation theory
--- it is a good criterion for deciding whether a given algebra
has a nice theory of support varieties. What's more, the Fg condition
turns out to be related with Gorensteinness --- an algebra $A$
is Gorenstein if $A$ satisfies the Fg condition \cite{EHSST04}.
Therefore, it is of great interest to know whether the Fg
condition holds for various algebras, and to find out which
relations between algebras preserve the Fg condition.
The second question was considered in \cite{Nag11, PSS14} for algebras $A$ and $eAe$
with $e$ being an idempotent, in \cite{Lin11} for separable equivalence between
symmetric algebras, in \cite{Ska16} for singular equivalence between Gorenstein algebras
and in \cite{KPS16} for general derived equivalence.
In this section, we will consider algebras whose
derived categories are related by a recollement of triangulated categories.
The following propositions will be used.

\begin{proposition}{\rm (See \cite[Proposition 10.3 ]{Sol06})}\label{prop-def-fg}
If an artin algebra $A$ satisfies the Fg condition, then
$\mathcal{E} _A^*(X)$
is a finitely generated $\HH ^*(A)$-module, for every $X\in D^b(A)$.

\end{proposition}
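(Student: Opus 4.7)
The plan is to establish the more general statement that $\Hom_A^*(X,Y) := \bigoplus_{n\in\mathbb{Z}}\Hom_{\mathcal{D}A}(X,Y[n])$ is a finitely generated $\HH^*(A)$-module for every pair $X,Y \in D^b(\mod A)$, and then specialize to $X=Y$. The strategy is a d\'evissage in both arguments: one reduces to simple modules as the base case, then propagates via thick-subcategory arguments driven by triangles, exploiting that $\HH^*(A)$ is Noetherian.

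First, recall that $\Hom_A^*(X,Y)$ carries a natural graded $\HH^*(A)$-module structure, induced for instance by $\varphi_Y = Y\otimes_A^L -$ acting on the second variable, and this action is functorial in both arguments. Consequently, given a distinguished triangle $X_1 \to X_2 \to X_3 \to X_1[1]$ in $D^b(\mod A)$, applying $\Hom_A^*(-,Y)$ yields a long exact sequence of graded $\HH^*(A)$-modules. Since $\HH^*(A)$ is Noetherian, submodules and quotients of finitely generated modules remain finitely generated; hence, whenever $\Hom_A^*(X_1,Y)$ and $\Hom_A^*(X_3,Y)$ are finitely generated, $\Hom_A^*(X_2,Y)$ sits in a short exact sequence between a quotient of the former (shifted) and a submodule of the latter, and is therefore itself finitely generated. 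The same argument applies in the second variable, so for each fixed $Y$ the full subcategory $\mathcal{C}_Y := \{X \in D^b(\mod A) \mid \Hom_A^*(X,Y)\text{ is f.g.\ over }\HH^*(A)\}$ is closed under shifts, triangles, and direct summands (the last by splitting Hom-groups through the summand), hence is a thick subcategory.

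For the base case, set $S := A/\rad A$ and write $S = S_1\oplus\cdots\oplus S_r$ into simples. By hypothesis $\mathcal{E}_A^*(S) = \bigoplus_{i,j}\Ext_A^*(S_i,S_j)$ is f.g.\ over $\HH^*(A)$, and since $\HH^*(A)$ is Noetherian each direct summand $\Ext_A^*(S_i,S_j)$ is also f.g. Because $A$ is artin, every $M \in \mod A$ admits a finite composition series; iterating the short exact sequences $0 \to M' \to M \to S_j \to 0$ and applying the thick-subcategory closure above yields $M \in \mathcal{C}_{S_j}$ for each $j$. A symmetric induction in the second variable then gives $M \in \mathcal{C}_N$ for all $N \in \mod A$. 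Finally, any $X \in D^b(\mod A)$ is built by finitely many extensions from stalk complexes $H^n(X)[-n]$ via the standard good-truncation triangles, so one more round of the thick-subcategory argument propagates the result from $\mod A$ to all of $D^b(\mod A)$ in both arguments. Specializing $X = Y$ then delivers the statement.

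The main technical nuisance will be verifying that the long exact sequences produced by the triangles are genuinely sequences of graded $\HH^*(A)$-modules, so that the Noetherian d\'evissage is legitimate; this amounts to checking naturality of $\varphi_{(-)}$ under morphisms of complexes (a formal consequence of the fact that $\varphi_X = X\otimes_A^L -$ is functorial in $X$), but must be made precise before the double induction takes over. Once this compatibility is in hand, the argument is essentially formal.
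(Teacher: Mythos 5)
The paper does not actually prove this proposition; it is cited verbatim from Solberg \cite[Proposition 10.3]{Sol06}, so there is no in-text argument to compare your proof against. That said, your d\'evissage is correct and is the standard route to this statement (and, as far as I can tell, essentially the route taken in the cited source): reduce to the bivariant claim that $\Hom_A^*(X,Y)$ is finitely generated over $\HH^*(A)$ for all $X,Y\in D^b(\mod A)$; show the good classes $\mathcal{C}_Y$ are thick using the long exact sequences from triangles together with the Noetherianity of $\HH^*(A)$; seed the induction on simples via Fg2, each $\Ext_A^*(S_i,S_j)$ being a direct summand of $\mathcal{E}_A^*(A/\rad A)$; propagate through $\mod A$ by composition series and through $D^b(\mod A)$ by truncation triangles; and finally specialize $X=Y$.

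On the naturality nuisance you flag at the end: a cleaner way to sidestep it is to use that $\Hom_A^*(X,Y)$ carries \emph{two} $\HH^*(A)$-actions, one by postcomposition via $\varphi_Y$ and one by precomposition via $\varphi_X$, which agree up to sign by the graded-commutativity result of Snashall--Solberg (or Suarez-Alvarez). When rolling a triangle in the first variable, use the $\varphi_Y$-action; it is constant in $X$, so it commutes with the precompositions and shifts in the long exact sequence for free. When rolling a triangle in the second variable, use the $\varphi_X$-action, which is constant in $Y$ and similarly commutes with postcompositions and shifts. Since the two actions give the same module up to a unit twist, finite generation transfers, and no direct naturality check on $\varphi_{(-)}$ is needed. (If you prefer the direct check, it comes from the natural isomorphism $-\otimes_A^L A \cong \id_{\mathcal{D}A}$ together with the fact that a class $\eta\in\Hom_{\mathcal{D}(A^e)}(A,A[n])$ induces a natural transformation $-\otimes_A^L A \Rightarrow (-\otimes_A^L A)[n]$ by bifunctoriality of $\otimes^L$.) One small overstatement in your write-up: concluding that each $\Ext_A^*(S_i,S_j)$ is finitely generated does not use Noetherianity---direct summands of finitely generated modules are finitely generated over any ring; Noetherianity is only needed for the submodule half of the long-exact-sequence step.
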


\begin{proposition}\label{prop-comp-fg}
Let $A$ and $B$ be finite-dimensional $k$-algebras. Set $ M = A/ \rad A$
and $ N = B/ \rad B$.
Assume that we have the following two commutative diagrams
$$\xymatrix@!=1pc{ \HH ^{\geq d}(A) \ar[d]_f ^\cong
\ar[rr]^{\varphi _M} &&\mathcal{E} _A^{\geq d}(M)\ar[d]_g ^\cong \\
\HH ^{\geq d}(B)
\ar[rr]^{\varphi _Y} &&\mathcal{E} _B^{\geq d}(Y)
} \begin{array}{c}
\\  \\ \mbox{and} \\ \end{array}
\xymatrix@!=1pc{ \HH ^{\geq d}(A) \ar[d]_{f'} ^\cong
\ar[rr]^{\varphi _X} &&\mathcal{E} _A^{\geq d}(X)\ar[d]_{g'} ^\cong \\
\HH ^{\geq d}(B)
\ar[rr]^{\varphi _N} &&\mathcal{E} _B^{\geq d}(N)
}
$$
of graded nonunital $k$-algebras, for some positive integer $d$, some $X\in D^b(A)$ and $Y\in D^b(B)$,
where the vertical maps $f$, $g$, $f'$ and $g'$ are isomorphisms.
Then $A$ satisfies Fg if and only if so does $B$.
\end{proposition}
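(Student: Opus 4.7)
The plan is to leverage the symmetry of the hypotheses: swapping $(A,M,X)\leftrightarrow(B,Y,N)$ and $(f,g)\leftrightarrow(f',g')$ exchanges the two commutative diagrams, so it suffices to prove only the direction ``$A$ satisfies Fg implies $B$ satisfies Fg''. One then verifies Fg1 and Fg2 for $B$ separately, using the isomorphisms $f,g,f',g'$ to transport information between the truncations $\HH^{\geq d}$ and $\mathcal{E}^{\geq d}$ of the two algebras.

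For Fg1, I would start from $\HH^*(A)$ being Noetherian and note that the graded ideal $\HH^{\geq d}(A)$ is finitely generated as an $\HH^*(A)$-module. Combined with the finite-dimensionality of $\HH^{<d}(A)$ --- each $\HH^n(A)=\Ext^n_{A^e}(A,A)$ is finite dimensional because $A^e$ is a finite dimensional algebra --- a graded Nakayama-type argument promotes this to $\HH^{\geq d}(A)$ being finitely generated as a non-unital graded $k$-algebra. Transporting along the isomorphism $f$ yields the same property for $\HH^{\geq d}(B)$, and adjoining the finite dimensional piece $\HH^{<d}(B)$ shows that $\HH^*(B)$ is a finitely generated graded-commutative $k$-algebra, hence Noetherian by Hilbert's basis theorem.

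For Fg2, I would apply Proposition~\ref{prop-def-fg} to the complex $X\in D^b(A)$ from the $A$-side of the second diagram, obtaining $\mathcal{E}_A^*(X)$ finitely generated as an $\HH^*(A)$-module. Since $\HH^*(A)$ is Noetherian, the submodule $\mathcal{E}_A^{\geq d}(X)$ is again finitely generated over $\HH^*(A)$; absorbing the finite-dimensional contribution $\HH^{<d}(A)\cdot\{\text{generators}\}$ into the generator set upgrades this to finite generation as a non-unital $\HH^{\geq d}(A)$-module. Via $g'$ and $f'$ this transfers to $\mathcal{E}_B^{\geq d}(N)$, and adjoining the finite dimensional low-degree piece $\mathcal{E}_B^{<d}(N)$ finally yields $\mathcal{E}_B^*(N)$ finitely generated as an $\HH^*(B)$-module, as required for Fg2.

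The main obstacle is the careful bookkeeping in the truncation arguments --- specifically, articulating the precise equivalence between ``$\HH^*(A)$ Noetherian'' and ``$\HH^{\geq d}(A)$ finitely generated as a non-unital graded $k$-algebra'' (with the automatic finite dimensionality of $\HH^{<d}(A)$ in hand), together with the analogous equivalence for finitely generated modules. These are mild graded-commutative generalities, but they must be formulated carefully so that the isomorphisms $f,g,f',g'$ of nonunital graded algebras and modules can be inserted at exactly the right level and so that the passage between unital and nonunital structures is unambiguous.
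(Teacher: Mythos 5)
Your proof is correct and follows essentially the same route as the paper: the paper's proof consists of citing Proposition~\ref{prop-def-fg} together with \cite[Proposition 6.3]{PSS14}, and what you have done is unpack the truncation-and-transfer bookkeeping contained in that cited proposition, while invoking Proposition~\ref{prop-def-fg} at exactly the same place (to convert Fg for $A$ into finite generation of $\mathcal{E}_A^*(X)$ over $\HH^*(A)$ before transporting along $g'$ and $f'$).
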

\begin{proof}
This follows from Proposition~\ref{prop-def-fg} and \cite[Proposition 6.3]{PSS14}.

\end{proof}
The following lemma is essentially due to \cite[Lemma 2.1]{KN09} and
\cite[Lemma 5]{Han14}.
\begin{lemma}\label{lemma-comm}
Let $A$, $B$ be an algebras, and let $\xymatrix@!=0.3pc{X \ar[r]^{u} & Y
\ar[r]^{v} & Z\ar[r] &}$ be a triangle in
$\mathcal{D} A$ such that $Y\in D^b(A)$, $Z\in K^b(\proj A)$ and $Z\in K^b(\inj A)$.
Assume that $F: \mathcal{D}A \rightarrow \mathcal{D}B$ is a triangulated functor
 such that $FY\in D^b(B)$, $FZ\in K^b(\proj B)$ and $FZ\in K^b(\inj B)$. Then
there is the following commutative diagram
 $$\xymatrix@!=3pc{ \mathcal{E} _A^{\geq d}(Y) \ar[d]_F
\ar[rr]^{\psi}_\cong &&\mathcal{E} _A^{\geq d}(X)\ar[d]_F \\
\mathcal{E} _B^{\geq d}(FY)
\ar[rr]^{{\psi}'} _\cong &&\mathcal{E} _B^{\geq d}(FX)
}$$ of graded nonunital $k$-algebras, for some positive integer $d$,
where the horizontal maps $\psi$ and $\psi '$ are isomorphisms.

\end{lemma}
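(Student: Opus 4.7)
The plan is to exploit the hypothesis that $Z$ lies simultaneously in $K^b(\proj A)$ and $K^b(\inj A)$, which forces Hom-groups between $Z$ and any object of $D^b(\mod A)$ to vanish outside a bounded range of degrees. The long exact sequences coming from the triangle then collapse to produce $\psi$ (and, by the same recipe on the $B$-side, $\psi'$), while the squares commute by pure functoriality of $F$.

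First, I would observe that $X\in D^b(\mod A)$ and $FX\in D^b(\mod B)$: the triangle $X\to Y\to Z$ places $X$ in the thick subcategory generated by $Y$ and $Z$, and applying the triangulated functor $F$ transports this to the $B$-side. Together with $Z\in K^b(\proj A)\cap K^b(\inj A)$ and the analogous conditions on $FZ$, this lets me choose a single positive integer $d$ such that the groups $\Hom_{\mathcal{D}A}(Z,Y[n])$, $\Hom_{\mathcal{D}A}(X,Z[n])$, $\Hom_{\mathcal{D}A}(Z,X[n+1])$ and their images under $F$ all vanish for every $n\geq d$.

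Next, applying $\Hom_{\mathcal{D}A}(-,Y[n])$ and $\Hom_{\mathcal{D}A}(X,-[n])$ to the triangle $X\xrightarrow{u}Y\to Z\to X[1]$, the resulting long exact sequences combined with the vanishings above yield isomorphisms $u^{*}\colon \Hom(Y,Y[n])\xrightarrow{\sim}\Hom(X,Y[n])$ and $u_{*}\colon \Hom(X,X[n])\xrightarrow{\sim}\Hom(X,Y[n])$ for every $n\geq d$. I would then set $\psi:=u_{*}^{-1}\circ u^{*}$, characterized by the relation $u[n]\circ \psi(f)=f\circ u$; the same recipe applied to $FX\xrightarrow{Fu}FY\to FZ\to FX[1]$ defines $\psi'$.

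Finally, I would check multiplicativity and that the square commutes. If $\psi(f_i)=g_i$ for $f_i\in \Hom(Y,Y[n_i])$, then $u[n_1+n_2]\circ(g_1[n_2]\circ g_2)=(f_1\circ u)[n_2]\circ g_2=f_1[n_2]\circ f_2\circ u$, so the uniqueness built into the defining relation forces $\psi(f_1\cdot f_2)=\psi(f_1)\cdot\psi(f_2)$, and similarly for $\psi'$. The square commutes because applying $F$ to $u[n]\circ \psi(f)=f\circ u$ yields $Fu[n]\circ F\psi(f)=Ff\circ Fu$, which by the uniqueness characterizing $\psi'(Ff)$ gives $F\psi(f)=\psi'(Ff)$. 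I anticipate no serious obstacle; the only mild subtlety is choosing one $d$ that simultaneously controls all four families of vanishings on both sides, which is immediate from finiteness of the relevant boundedness thresholds.
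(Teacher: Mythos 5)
Your proof is correct and follows essentially the same route as the paper: applying $\Hom(-,Y[n])$ and $\Hom(X,-[n])$ to the triangle, using $Z\in K^b(\proj A)$ together with $Y,X\in D^b(A)$ to kill the $\Hom(Z,Y[\bullet])$ terms and $Z\in K^b(\inj A)$ to kill the $\Hom(X,Z[\bullet])$ terms (and likewise on the $B$-side), then composing $u^*$ with $(u_*)^{-1}$ to get $\psi$. The one thing you add beyond what the paper spells out is the explicit verification that $\psi$ is multiplicative and that the square commutes, both derived cleanly from the characterizing relation $u[n]\circ\psi(f)=f\circ u$; the paper ends with ``Now the statement holds obviously,'' so your check is a useful supplement rather than a different argument.
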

\begin{proof}
For the sake of simplicity, we just denote the bifunctor $\Hom _{\mathcal{D}A}(-,-)$ and
$\Hom _{\mathcal{D}B}(-,-)$ by $(-,-)$, when it may not cause any confusion.
Applying the functor $(-, Y[n])$
(resp. $(-, FY[n])$ ) to
the triangle $\xymatrix@!=0.3pc{X \ar[r]^{u} & Y
\ar[r]^{v} & Z\ar[r] &}$ (resp. $\xymatrix{FX \ar[r]^{Fu} & FY
\ar[r]^{Fv} & FZ\ar[r] &}$), we have the following commutative diagram
$$\xymatrix{(Z, Y[n]) \ar[r]^-{v^*} \ar[d]_{F} & (Y, Y[n])
\ar[r]^{u^*} \ar[d]_{F} & (X, Y[n]) \ar[r] \ar[d]_{F} & (Z[-1], Y[n]) \ar[d]_{F} \\
(FZ, FY[n]) \ar[r]^-{(Fv)^*} & (FY, FY[n])
\ar[r]^-{(Fu)^*} & (FX, FY[n]) \ar[r] & (FZ[-1], FY[n]).
} $$
Since $Z\in K^b(\proj A)$, $Y\in D^b(A)$, $FZ\in K^b(\proj B)$
and $FY\in D^b(B)$,
there exists some integer $s$ such that
$(Z, Y[n])\cong 0 \cong (Z[-1], Y[n])$ and
$(FZ, FY[n])\cong 0 \cong (FZ[-1], FY[n])$, for any $n\geq s$.
Therefore, we have the following commutative diagram
$$\xymatrix{(Y, Y[n])
\ar[r]^{u^*}_\cong \ar[d]_{F} & (X, Y[n]) \ar[d]_{F} \\
(FY, FY[n])
\ar[r]^-{(Fu)^*}_\cong & (FX, FY[n]),
}   \ \ \ \ \ \ \ \ \begin{array}{c}
\\  \\ \mbox{(I)} \\ \end{array}$$
for any integer $n\geq s$,
where the horizontal maps $u^*$ and $(Fu)^*$ are isomorphisms.

Similarly, applying the functor $(X, -[n])$
(resp. $(FX, -[n])$) to
the triangle $\xymatrix@!=0.3pc{X \ar[r]^{u} & Y
\ar[r]^{v} & Z\ar[r] &}$ (resp. $\xymatrix{FX \ar[r]^{Fu} & FY
\ar[r]^{Fv} & FZ\ar[r] &}$), we have the following commutative diagram
$$\xymatrix{(X, Z[n-1]) \ar[r] \ar[d]_{F} & (X, X[n])
\ar[r]^{(u[n])_*} \ar[d]_{F} & (X, Y[n]) \ar[r] \ar[d]_{F} & (X, Z[n]) \ar[d]_{F} \\
(FX, FZ[n-1]) \ar[r]  & (FX, FX[n])
\ar[r]^{(Fu[n])_*}  & (FX, FY[n]) \ar[r]  & (FX, FZ[n]).
} $$
On the other hand, $Z\in K^b(\proj A)$ and $Y\in D^b(A)$
implies that $X\in D^b(A)$, and $FZ\in K^b(\proj B)$
and $FY\in D^b(B)$ implies that $FX\in D^b(B)$.
Since $Z\in K^b(\inj A)$ and $FZ\in K^b(\inj B)$,
there exists some integer $t$ such that
$(X, Z[n-1])\cong 0 \cong (X, Z[n])$ and
$(FX, FZ[n-1])\cong 0 \cong (FX, FZ[n])$, for any $n\geq t$.
Therefore, we have the following commutative diagram
$$\xymatrix{(X, X[n])
\ar[r]^{(u[n])_*}_\cong \ar[d]_{F} & (X, Y[n]) \ar[d]_{F}\\
(FX, FX[n])
\ar[r]^{(Fu[n])_*}_\cong  & (FX, FY[n]),
} \ \ \ \ \ \ \ \ \begin{array}{c}
\\  \\ \mbox{(II)} \\ \end{array}$$
for any integer $n\geq t$,
where the horizontal maps $(u[n])_*$ and $(Fu[n])_*$ are isomorphisms.
Let $d= \max \{1, t,s\}$. Combining (I) and (II), we have commutative diagram
$$\xymatrix{(Y,Y[n])
\ar[r]^\psi_\cong \ar[d]_{F} & (X, X[n]) \ar[d]_{F}\\
(FY, FY[n])
\ar[r]^{{\psi}'} _\cong  & (FX, FX[n]),
} $$
for any integer $n\geq d\geq 1$,
where the horizontal maps $\psi$ and ${\psi}'$ are isomorphisms.
Now the statement holds obviously.

\end{proof}

\begin{lemma}\label{lemma-func-M}
Let $M$ be a complex of $D^b(A)$. Then the functor
$\varphi _M =M\otimes _A^L-$
sends $K^b(\proj A^e)$ to $K^b(\proj A)$.
\end{lemma}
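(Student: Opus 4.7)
The plan is to reduce everything to checking the functor on the single generator $A^e$ of $K^b(\proj A^e)$, and then to exhibit this one value as a bounded complex of finitely generated free right $A$-modules.

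First, I would observe that since $\varphi_M = M \otimes_A^L -$ is a triangulated functor that preserves direct summands, and since $K^b(\proj A^e)$ equals the thick subcategory $\thick(A^e)$ of $\mathcal{D}(A^e)$ generated by the regular bimodule, it suffices to prove that $\varphi_M(A^e) \in K^b(\proj A)$. Indeed, once this is established, the subcategory $\{P \in \mathcal{D}(A^e) \mid \varphi_M(P) \in K^b(\proj A)\}$ is a thick subcategory of $\mathcal{D}(A^e)$ containing $A^e$, and therefore contains all of $K^b(\proj A^e)$.

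Next, I would compute $\varphi_M(A^e)$ explicitly. Viewing $A^e = A^{\op} \otimes_k A$ as an $A$-bimodule in the usual way (left $A$-action on the first tensorand, right $A$-action on the second), the left $A$-structure that participates in $M \otimes_A^L -$ makes $A^e$ a free left $A$-module, so the derived tensor product coincides with the underived one and
$$M \otimes_A^L A^e \;\cong\; M \otimes_A (A \otimes_k A) \;\cong\; M \otimes_k A,$$
with right $A$-action now carried by the second factor.

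Finally, I would argue that $M \otimes_k A \in K^b(\proj A)$. Because $M \in D^b(\mod A)$ and $A$ is finite dimensional over $k$, $M$ is quasi-isomorphic to a bounded complex $M^\bullet$ whose components $M^i$ are finitely generated (hence finite dimensional over $k$) right $A$-modules. Then $M^\bullet \otimes_k A$ is a bounded complex whose degree-$i$ term $M^i \otimes_k A$ is a free right $A$-module of finite rank $\dim_k M^i$, so the complex lies in $K^b(\proj A)$. This is the entire argument; no genuine obstacle arises, the only subtlety being to keep the two $A$-actions on $A^e$ straight when identifying $M \otimes_A^L A^e$ with $M \otimes_k A$.
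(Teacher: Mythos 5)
Your proof is correct and rests on the same key calculation as the paper's, namely $M^i \otimes_A A^e \cong M^i \otimes_k A$, which is a free right $A$-module. The only packaging difference is that you reduce to the single compact generator $A^e$ via a thick-subcategory (d\'evissage) argument, whereas the paper directly takes an arbitrary bounded complex $P$ of finitely generated projective $A^e$-modules, forms the total complex of $M \otimes_A P$, and observes that each term $\oplus_{i+j=n} M^i \otimes_A P^j$ is projective because $P^j$ is a summand of a finite free $A^e$-module.
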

\begin{proof}
Assume $M$ is of the form $ 0 \longrightarrow M^{k} \longrightarrow
M^{k+1} \longrightarrow \cdots \longrightarrow M^{l} \longrightarrow 0 $
with $M^i\in \mod A$, and
$P: 0 \longrightarrow P^{p} \longrightarrow
P^{p+1} \longrightarrow \cdots \longrightarrow P^{q} \longrightarrow 0 $
with $P^i\in \proj A^e$. Then $\varphi _M(P) =M\otimes _A^LP\cong
(0 \longrightarrow X^{k+p} \longrightarrow
X^{k+p+1} \longrightarrow \cdots \longrightarrow X^{l+q} \longrightarrow 0)$,
where $X^n=\oplus _{i+j=n}M^i\otimes _AP^j$.
Clearly, $M^i\otimes _A(A\otimes _kA)\cong M^i\otimes _kA\in \proj A$,
and thus, $M^i\otimes _AP^j\in \proj A$. Therefore,
$\varphi _M(P)\in K^b(\proj A)$.
\end{proof}

\begin{lemma}\label{lemma-ff}
Let ($\mathcal{D} B$,\ $\mathcal{D} A$,\ $\mathcal{D} C$,\ $i^*,i_*,i^!,j_!,j^*,j_*$)
be a standard recollement defined
by $X \in D^b(C^{op} \otimes A)$ and $Y \in D^b(A^{op} \otimes B)$.
Then the functor $X^*\otimes _C^L-\otimes_C^LX :\mathcal{D} (C^e)
\longrightarrow \mathcal{D} (A^e)$ is fully faithful.
\end{lemma}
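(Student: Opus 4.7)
My plan is to factor $F = X^*\otimes^L_C-\otimes^L_CX$ as a composition $F = F_2\circ F_1$ of two fully faithful embeddings, each realized as the $j_!$ of a recollement induced from the given one via Han's theorems. Specifically, set
$$F_1 \;=\; -\otimes^L_C X\colon \mathcal{D}(C^e)\longrightarrow \mathcal{D}(C^{\op}\otimes A),\qquad F_2 \;=\; X^*\otimes^L_C -\colon \mathcal{D}(C^{\op}\otimes A)\longrightarrow \mathcal{D}(A^e).$$
Since $F_1$ only modifies the right $C$-action of its input while $F_2$ only modifies the left $C$-action, the two operations act on orthogonal sides of a $C$-bimodule; one checks that $F_2F_1(N) \cong X^*\otimes^L_C N\otimes^L_C X = F(N)$.

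To show that $F_1$ is fully faithful, I apply Han's Theorem~1 on induced recollements of tensor product algebras (used already in the proof of Theorem~\ref{theorem-main-2} (b)$\Leftrightarrow$(c)) to the given standard recollement, with the extra factor $C^{\op}$ on the free side. This yields a standard recollement
$$\mathcal{D}(C^{\op}\otimes B)\longrightarrow \mathcal{D}(C^{\op}\otimes A)\longrightarrow \mathcal{D}(C^{\op}\otimes C)=\mathcal{D}(C^e)$$
in which the right-hand embedding $j_!$ is precisely $-\otimes^L_C X$. Since the $j_!$ functor in any recollement is a full embedding, $F_1$ is fully faithful.

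To handle $F_2$, I first pass to the opposite-algebra recollement provided by Han's Theorem~2, producing a standard recollement $(\mathcal{D}B^{\op},\mathcal{D}A^{\op},\mathcal{D}C^{\op})$ whose $j_!$ (expressed in the left-module picture) takes the form $X^*\otimes^L_C -$. Then, applying Han's Theorem~1 to this opposite recollement with an additional $A$-factor, I obtain a recollement
$$\mathcal{D}(A\otimes B^{\op})\longrightarrow \mathcal{D}(A\otimes A^{\op})=\mathcal{D}(A^e)\longrightarrow \mathcal{D}(A\otimes C^{\op})=\mathcal{D}(C^{\op}\otimes A)$$
whose $j_!$ is exactly $X^*\otimes^L_C -$. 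Hence $F_2$ is fully faithful, and $F=F_2\circ F_1$ is fully faithful as a composition of two fully faithful functors.

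The main difficulty will be the bookkeeping of left/right bimodule structures when combining Han's two theorems: one has to verify carefully that in each induced recollement the relevant tensor factor enters on the correct side, so that the $j_!$ produced is literally $-\otimes^L_C X$ (respectively $X^*\otimes^L_C -$) and not a twisted variant. A more computational alternative, should this identification be awkward, is to construct the right adjoint $G$ of $F$ explicitly as $\RHom_A\bigl(X,\RHom_{A^{\op}}(X^*,-)\bigr)$ and check directly that the unit $N\to GF(N)$ is a natural isomorphism, using Proposition~\ref{prpo-functor} together with the dual identification $\RHom_{A^{\op}}(X^*,-)\cong X\otimes^L_A-$ (from reflexivity of the perfect complex $X$) and the key identity $X\otimes^L_AX^*\cong C$ in $\mathcal{D}(C^e)$ coming from $j^*j_!\cong\id$.
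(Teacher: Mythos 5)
Your proposal is correct and is essentially the paper's own proof: both factor $X^*\otimes^L_C-\otimes^L_CX$ as the composite of $-\otimes^L_C X$ and $X^*\otimes^L_C-$, and realize each as the fully faithful $j_!$ of a recollement induced from the given one via Han's Theorems~1 and~2. The only difference is that the paper simply writes down the two induced recollements and identifies $G_1$ and $G_2$ directly, whereas you flag the bimodule-side bookkeeping as the delicate step and offer a fallback via an explicit unit computation.
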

\begin{proof}
By \cite[Theorem 1 and Theorem 2]{Han14}, there are two recollements
$$\xymatrix @R=0.6in @C=0.8in{
\mathcal{D}(C^{\op} \otimes _k B)
\ar[r]& \mathcal{D}(C^{\op} \otimes _kA) \ar@<+3ex>[l]
\ar@<-3ex>[l] \ar[r] & \mathcal{D}(C^{e})
\ar@<+3ex>[l] \ar@<-3ex>[l]|{G_1} \ \ \ \mbox{and}}$$

$$\xymatrix @R=0.6in @C=0.8in{
\mathcal{D}(B^{\op} \otimes_k A)
\ar[r] & \ \ \ \ \mathcal{D}(A^e) \ar@<+3ex>[l]
\ar@<-3ex>[l] \ar[r] & \ \ \ \ \mathcal{D}(C^{\op} \otimes _kA)
\ar@<+3ex>[l] \ar@<-3ex>[l]|{G_2} },$$
where $G_1
\cong -\otimes_C^L X$ and $G_2 \cong X^* \otimes_C^L -$.
Since $G_1$ and $G_2$ are fully faithful, we have that
the functor $G_2G_1\cong X^*\otimes _C^L-\otimes_C^LX :\mathcal{D} (C^e)
\longrightarrow \mathcal{D} (A^e)$ is fully faithful.
\end{proof}

Now we are ready to compare the Fg condition in the
framework of recollement.
\begin{theorem}\label{theorem-Fg}
Let $A$, $B$ and $C$ be finite dimensional
algebras over an algebraically closed field $k$, and let
($\mathcal{D} B$,\ $\mathcal{D} A$,\ $\mathcal{D} C$,
\ $i^*,i_*,i^!,j_!,j^*,j_*$)
be a recollement such that the functor $j^*$ is an
eventually homological isomorphism.
Then we have

{\rm (1)} $\HH ^{\geq d}(A)\cong \HH ^{\geq d}(C)$,
for some positive integer $d$.

{\rm (2)} $A$ satisfies Fg if and only if so does $C$.
\end{theorem}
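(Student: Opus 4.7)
The plan is to choose a standard recollement (allowed by Corollary~\ref{cor-ehi-stand}) defined by bimodules $X$ and $Y$ with $\gl B<\infty$ and $\RHom_B(Y,Y)\in K^b(\proj A^e)$ (Theorem~\ref{theorem-main-2}(c)), and then to lift this recollement to the enveloping algebras via Han's Theorems~1 and~2 in \cite{Han14}, obtaining the outer recollement $(\mathcal{D}(A^{\op}\otimes B),\mathcal{D}(A^e),\mathcal{D}(A^{\op}\otimes C))$ in which $i_*\cong -\otimes^L_B Y^*$ and $j_!\cong -\otimes^L_C X$. Evaluating its canonical triangle at the bimodule $A\in\mathcal{D}(A^e)$ gives
\[
X^*\otimes^L_C X \longrightarrow A \longrightarrow \RHom_B(Y,Y) \longrightarrow X^*\otimes^L_C X[1],
\]
using $j^*A=X^*$, $i^*A=Y$, and $i_*Y=Y\otimes^L_B Y^*=\RHom_B(Y,Y)$. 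Lemma~\ref{lemma-ff} identifies $X^*\otimes^L_C X$ as $\Phi(C)$ for the fully faithful functor $\Phi=X^*\otimes^L_C-\otimes^L_C X:\mathcal{D}(C^e)\to\mathcal{D}(A^e)$.

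For part (1), I would apply $\Hom_{\mathcal{D}(A^e)}(-,A[n])$ to this triangle: since $A\in D^b(\mod A^e)$ and $\RHom_B(Y,Y)\in K^b(\proj A^e)$, the groups $\Hom(\RHom_B(Y,Y)[i],A[n])$ vanish for all $n$ sufficiently large, whence $\HH^n(A)\cong \Hom_{\mathcal{D}(A^e)}(X^*\otimes^L_C X,A[n])$ in high degree. Next I apply $\Hom_{\mathcal{D}(A^e)}(X^*\otimes^L_C X,-)$ to the triangle: the crucial observation is $i^*(X^*\otimes^L_C X)=i^*j_!j^*A=0$ by the recollement axiom $i^*j_!=0$, so by the adjunction $(i^*,i_*)$ every group $\Hom(X^*\otimes^L_C X,\RHom_B(Y,Y)[n])$ vanishes. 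Hence $\Hom(X^*\otimes^L_C X,A[n])\cong\Hom(X^*\otimes^L_C X,X^*\otimes^L_C X[n])\cong\HH^n(C)$ by fully faithfulness of $\Phi$. Concatenating the isomorphisms produces $\HH^{\geq d}(A)\cong\HH^{\geq d}(C)$ for some $d>0$; compatibility with Yoneda products is a direct check using that the lift $g_\alpha:X^*\otimes^L_C X\to X^*\otimes^L_C X[n]$ of $\alpha:A\to A[n]$, characterised by $u\circ g_\alpha=\alpha\circ u$, satisfies $g_\alpha\circ g_\beta=g_{\alpha\circ\beta}$ and is of the form $\Phi(f\alpha)$ for a unique $f\alpha\in\HH^n(C)$.

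For part (2), I apply Proposition~\ref{prop-comp-fg} with $(A,C)$ playing the roles of $(A,B)$, taking $M=A/\rad A$, $\widetilde Y=j^*M\in D^b(\mod C)$ for the first square, and $\widetilde X=j_!(C/\rad C)\in D^b(\mod A)$, $N=C/\rad C$ for the second (these lie in $D^b(\mod)$ by Theorem~\ref{theorem-main-2}). Lemma~\ref{lemma-comm} applied to the triangle $j_!j^*M\to M\to i_*i^*M$ in $\mathcal{D}A$ with $F=j^*$ yields $g:\mathcal{E}_A^{\geq d}(M)\cong\mathcal{E}_C^{\geq d}(j^*M)$: its hypotheses hold because $\gl B<\infty$ forces $i^*M\in D^b(\mod B)=K^b(\proj B)\cap K^b(\inj B)$, $i_*$ preserves both $K^b(\proj A)$ and $K^b(\inj A)$ by Theorem~\ref{theorem-main-1}, and $FZ=j^*i_*i^*M=0$. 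The analogous triangle at $\widetilde X=j_!N$ collapses to an isomorphism $j_!N\cong j_!j^*j_!N$ since $i^*j_!=0$, giving $g':\mathcal{E}_A^{\geq d}(j_!N)\cong\mathcal{E}_C^{\geq d}(N)$ via fully faithfulness of $j_!$. The main obstacle is verifying that $f$ together with $g,g'$ make the two squares of Proposition~\ref{prop-comp-fg} commute. This reduces to tracing $\alpha:A\to A[n]$ through both legs of each square: from $u\circ\Phi(f\alpha)=\alpha\circ u$ in $\mathcal{D}(A^e)$, applying $M\otimes^L_A-$ produces the identity $(j_!j^*M\to M)\circ j_!(\varphi_{j^*M}(f\alpha))=\varphi_M(\alpha)\circ(j_!j^*M\to M)$ in $\mathcal{D}A$; applying $j^*$ and using $j^*j_!=\id$ converts this into $g\bigl(\varphi_M(\alpha)\bigr)=\varphi_{j^*M}(f\alpha)$. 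The analogous identity for the second square follows by the same argument using $i^*j_!N=0$. Once commutativity is in hand, Proposition~\ref{prop-comp-fg} delivers the Fg equivalence between $A$ and $C$.
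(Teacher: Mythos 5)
Your proposal is correct, and for part~(2) it takes a genuinely different route from the paper's. The paper applies Lemma~\ref{lemma-comm} to the triangle $X^*\otimes^L_CX\to A\to Y\otimes^L_BY^*\to$ inside $\mathcal{D}(A^e)$ with $F=\varphi_M=M\otimes^L_A-$; to satisfy the hypothesis $Y\otimes^L_BY^*\in K^b(\inj A^e)$ (and $\varphi_M(Y\otimes^L_BY^*)\in K^b(\inj A)$) it first assumes one of $A$, $C$ satisfies Fg, invokes \cite[Theorem 1.5(a)]{EHSST04} and Theorem~\ref{theorem-Gor} to conclude both are Gorenstein, and then uses \cite[Lemma 2.1]{BJ13} for Gorensteinness of $A^e$. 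You instead apply Lemma~\ref{lemma-comm} downstairs, to the triangle $j_!j^*M\to M\to i_*i^*M\to$ in $\mathcal{D}A$ with $F=j^*$; here the needed hypotheses $i_*i^*M\in K^b(\proj A)\cap K^b(\inj A)$ and $j^*i_*i^*M=0$ follow directly from $\gl B<\infty$, the restriction properties in Theorem~\ref{theorem-main-1}, and $j^*i_*=0$, so the Gorenstein detour is eliminated and the diagrams are produced unconditionally. The trade-off is that the two squares of Proposition~\ref{prop-comp-fg} no longer come out of a single application of the lemma with $F=\varphi_M$; you have to identify the horizontal isomorphism $f:\HH^{\geq d}(A)\to\HH^{\geq d}(C)$ (via the defining property $u[n]\circ\Phi(f\alpha)=\alpha\circ u$) and then separately check compatibility with $g$ and $g'$ by applying $M\otimes^L_A-$ (resp.\ $j_!N\otimes^L_A-$) and $j^*$ to that identity. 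Your sketch of this commutativity check is correct. Your part~(1) is likewise a slight variant: instead of quoting the long exact sequence of \cite[Corollary 3]{Han14} directly, you apply $\Hom(-,A[n])$ and then $\Hom(X^*\otimes^L_CX,-)$ to the canonical triangle, using $I^*J_!=0$ in the lifted recollement to kill $\Hom(X^*\otimes^L_CX,\RHom_B(Y,Y)[n])$ for all $n$ — an argument that also yields multiplicativity without needing $\RHom_B(Y,Y)\in K^b(\inj A^e)$. Both routes reach the same conclusion; yours is more elementary in avoiding Gorensteinness, while the paper's is tidier in deriving both squares from one instance of Lemma~\ref{lemma-comm}.
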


\begin{proof}
(1): Due to Corollary~\ref{cor-ehi-stand}, we may assume that
the recollement ($\mathcal{D} B$,\ $\mathcal{D} A$,\ $\mathcal{D} C$,\ $i^*,i_*,i^!,j_!,j^*,j_*$)
is standard and defined
by $X \in D^b(C^{op} \otimes A)$ and $Y \in D^b(A^{op} \otimes B)$.
Let $X^*=\RHom_A(X,A)$ and $Y^*=\RHom_B(Y,B)$.
From \cite[Corollary 3]{Han14}, there is a long exact sequence
$$\cdots \rightarrow \Hom _{D(A^e)}(\RHom _B(Y,Y),A[n])\rightarrow
\HH ^n(A)\rightarrow \HH ^n(C)\rightarrow \cdots .$$
On the other hand, it follows from Theorem~\ref{theorem-main-2}
that $\RHom _B(Y,Y)\in K^b(\proj A^e)$, and thus, there exists
some integer $d$ such that $\Hom _{D(A^e)}(\RHom _B(Y,Y),A[n])
\linebreak \cong 0$,
for any $n\geq d$. As a result, we get $\HH ^{n}(A)\cong \HH ^{n}(C)$,
for any $n\geq d$.

(2): Assume either $A$ or $C$ satisfies Fg. Then, it follows from
\cite[Theorem 1.5(a)]{EHSST04} that either $A$ or $C$
is Gorenstein, and by Theorem~\ref{theorem-Gor},
both $A$ and $C$
are Gorenstein.
From \cite[Theorem 1]{Han14},
we have the following recollement
$$\xymatrix @R=0.6in @C=0.8in{
\mathcal{D}(A^{\op} \otimes_k B)
\ar[r]|{I_*} & \mathcal{D}(A^e) \ar@<+3ex>[l]
\ar@<-3ex>[l]|{I^*} \ar[r]|{J^*} & \mathcal{D}(A^{\op} \otimes _kC)
\ar@<+3ex>[l] \ar@<-3ex>[l]|{J_!} }$$
where $I^*
\cong - \otimes_A^L Y$, $I_* \cong - \otimes_B^L Y^{*}$,
$J_!
\cong - \otimes_C^L X$ and $J^* \cong - \otimes_A^L X^{*}$.
Therefore, there is a triangle
$\begin{array}{l} X^* \otimes _C^LX \rightarrow A  \rightarrow Y\otimes_B^LY^*  \rightarrow
\end{array}$ in $\mathcal{D}(A^e)$.
By Theorem~\ref{theorem-main-2}, $Y\otimes_B^LY^*\cong \RHom _B(Y,Y)
\in K^b(\proj A^e)$, and by \cite[Lemma 2.1]{BJ13}, the Gorensteinness of $A$
implies the Gorensteinness of $A^e$. Hence, we have
$Y\otimes_B^LY^*\in K^b(\inj A^e)$.
For any $M\in D^b(A)$, consider the functor $\varphi _M =M\otimes _A^L-
:\mathcal{D}(A^e)\longrightarrow \mathcal{D}(A)$.
Clearly, $\varphi _M(A)=M\otimes _A^LA\cong M \in D^b(A)$,
and by Lemma~\ref{lemma-func-M}, $\varphi _M(Y\otimes_B^LY^*)
\in K^b(\proj A)=K^b(\inj A)$.
Applying Lemma~\ref{lemma-comm} to the triangle
$\begin{array}{l} X^* \otimes _C^LX \rightarrow A  \rightarrow Y\otimes_B^LY^*  \rightarrow
\end{array}$ and the functor $\varphi _M$, we obtain
the following commutative diagram
 $$\xymatrix{ \HH ^{\geq d}(A) \ar[d]_{\varphi _M}
\ar[rr]^{\psi}_\cong &&\mathcal{E} _{A^e}^{\geq d}(X^* \otimes _C^LX)\ar[d]^{\varphi _M} \\
\mathcal{E} _A^{\geq d}(M)
\ar[rr]^{{\psi}'} _\cong &&\mathcal{E} _A^{\geq d}(M\otimes_A^LX^* \otimes _C^LX)
}$$ of graded nonunital $k$-algebras, for some positive integer $d$,
where the horizontal maps $\psi$ and $\psi '$ are isomorphisms.
On the other hand, the associativity of the tensor product yields the
commutative diagram
$$\xymatrix{\mathcal{E} _{A^e}^{\geq d}(X^* \otimes _C^LX)
\ar[d]_{\varphi _M} && \HH ^{\geq d}(C)
\ar[ll]_{X^*\otimes _C^L-\otimes_C^LX}^\cong \ar[d]^{\varphi _{M\otimes_A^LX^*}} \\
\mathcal{E} _A^{\geq d}(M\otimes_A^LX^* \otimes _C^LX) &&
\mathcal{E} _C^{\geq d}(M\otimes_A^LX^* ),\ar[ll]_{-\otimes_C^LX}^\cong
}$$ where the horizontal two maps are isomorphisms,
because both the two functors are fully faithful, see Lemma~\ref{lemma-ff}.
As a result, we obtain the following commutative diagram
$$\xymatrix{ \HH ^{\geq d}(A) \ar[d]_{\varphi _M}
\ar[rr]^\cong &&\HH ^{\geq d}(C)\ar[d]^{\varphi _{M\otimes_A^LX^*}} \\
\mathcal{E} _A^{\geq d}(M)
\ar[rr]^\cong &&\mathcal{E} _C^{\geq d}(M\otimes_A^LX^* )
}$$ of graded nonunital $k$-algebras,
where the horizontal maps are isomorphisms.
Now take $M=A/\rad A$ and $M=C/\rad C\otimes_C^LX$ respectively,
we obtain two desired commutative diagrams in
Proposition~\ref{prop-comp-fg}. Therefore,
$A$ satisfies Fg if and only if so does $B$.
\end{proof}

\section{\large Applications}

\indent\indent In this section, we will
apply our main theorem to stratifying ideals,
triangular matrix algebras and
derived discrete algebras, and we prove that
derived discrete algebras satisfy the Fg condition.

Let $A$ be an algebra, and let $e\in A$ be an idempotent
such that $AeA$ is a stratifying ideal, that is,
$Ae\otimes _{eAe}^LeA\cong AeA$ canonically.
From \cite{CPS96}, there is a recollement
$$\xymatrix@!=9pc{ \mathcal{D}(A/AeA)\ar[r]|{- \otimes^L_{A/AeA} A/AeA}
& \mathcal{D}A \ar@<-3ex>[l]|{-\otimes^L_A A/AeA}
\ar@<+3ex>[l] \ar[r]|{-\otimes _A^L Ae} &
\mathcal{D}(eAe) \ar@<-3ex>[l]|{-\otimes^L_{eAe} eA}
\ar@<+3ex>[l]} .$$
By Theorem~\ref{theorem-main-2},
the functor $-\otimes _A^L Ae$ is an
eventually homological isomorphism if and only
if $\pd _{A^e}A/AeA<\infty$. Applying
Theorem~\ref{theorem-Gor} and Theorem~\ref{theorem-Fg},
we recover the following result of Nagase.
\begin{corollary}{\rm (Nagase \cite{Nag11})}\label{cor-idem}
Let $A$ be a finite dimensional
algebra over an algebraically closed field $k$.
Suppose $AeA$ is a stratifying ideal
and $\pd _{A^e}A/AeA<\infty$. Then we have

{\rm (1)} $\HH ^{\geq d}(A)\cong \HH ^{\geq d}(eAe)$,
for some positive integer $d$.

{\rm (2)} $A$ satisfies Fg if and only if so does $eAe$ .

{\rm (3)} $A$ is Gorenstein if and only if so is $eAe$.

\end{corollary}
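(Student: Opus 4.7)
The plan is to recognize the Cline--Parshall--Scott recollement coming from the stratifying ideal $AeA$ as a standard recollement in the sense of Definition~\ref{def-standard-recollement}, to verify that the hypothesis $\pd_{A^e}(A/AeA) < \infty$ translates exactly into condition (c) of Theorem~\ref{theorem-main-2}, and then to harvest the three conclusions from Theorem~\ref{theorem-Gor} and Theorem~\ref{theorem-Fg}. Setting $B = A/AeA$ and $C = eAe$, the recollement displayed just before the corollary is standard and is defined by $X = eA \in D^b(C^{op} \otimes A)$, via $j_! \cong -\otimes_C^L X$, and by $Y = A/AeA \in D^b(A^{op} \otimes B)$, via $i^* \cong -\otimes_A^L Y$.

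Next I would identify $\RHom_B(Y,Y)$ as an object of $\mathcal{D}(A^e)$. Since $Y = {}_A(A/AeA)_B$ and $B = A/AeA$, there is a canonical isomorphism $\RHom_{A/AeA}(A/AeA, A/AeA) \cong A/AeA$, and the resulting $A^e$-module structure is precisely the one coming from the surjection $A \twoheadrightarrow A/AeA$. Hence condition (c) of Theorem~\ref{theorem-main-2}, namely $\RHom_B(Y,Y) \in K^b(\proj A^e)$, is literally the hypothesis $\pd_{A^e}(A/AeA) < \infty$. Invoking the implication (c)$\Rightarrow$(a) of Theorem~\ref{theorem-main-2}, the functor $j^* = -\otimes_A^L Ae$ is therefore an eventually homological isomorphism.

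Once this placement inside the framework of Theorem B is secured, the three assertions follow at once: (3) is Theorem~\ref{theorem-Gor} applied to the recollement above, while (1) and (2) are the two conclusions of Theorem~\ref{theorem-Fg}, respectively. The one genuine technical point, and the most likely place for a subtle slip, is verifying that the $A^e$-bimodule structure on $\RHom_B(Y,Y)$ actually agrees with that on $A/AeA$, so that the quasi-isomorphism $\RHom_B(Y,Y) \cong A/AeA$ is an isomorphism in $\mathcal{D}(A^e)$ and not merely on each side of $A$ separately. Once that compatibility is observed, no further work is required beyond citing the main theorems.
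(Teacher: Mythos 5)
Your proposal is correct and follows the same route as the paper: recognize the Cline--Parshall--Scott recollement as standard with $Y = A/AeA$ and $X = eA$, observe that $\RHom_B(Y,Y) \cong A/AeA$ in $\mathcal{D}(A^e)$ so that condition (c) of Theorem~\ref{theorem-main-2} is exactly $\pd_{A^e}(A/AeA) < \infty$, and then cite Theorem~\ref{theorem-Gor} and Theorem~\ref{theorem-Fg}. Your attention to the $A^e$-bimodule structure on $\RHom_B(Y,Y)$ is well placed, and your identification of it with the one induced by the surjection $A \twoheadrightarrow A/AeA$ is right (since $Y_B = B$ is free, $\RHom_B(Y,Y)=\Hom_B(B,B)=B=A/AeA$ with the left and right $A$-actions coming from the two $A$-module structures on $Y$).
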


Let $B$ and $C$ be finite dimensional algebras
over a field $k$, and let $M$ be a finitely
generated $C$-$B$-bimodule.
Then we have the triangular matrix algebra
$A = \left[\begin{array}{cc} B & 0
\\ M & C \end{array}\right] $,
where the addition and the multiplication are given by the ordinary operations on
matrices. From \cite[Example 3.4]{AKLY17}, there is
a recollement $$\xymatrix@!=6pc{ \mathcal{D}B\ar[r]|{- \otimes^L_B e_1A}
& \mathcal{D}A \ar@<-3ex>[l]
\ar@<+3ex>[l] \ar[r]|{-\otimes _A^LAe_2} &
\mathcal{D}C \ar@<-3ex>[l]|{-\otimes^L_C e_2A}
\ar@<+3ex>[l]},$$
where $e_1=\left[\begin{array}{cc} 1 & 0
\\ 0 & 0\end{array}\right] $ and $e_2=\left[\begin{array}{cc} 0 & 0
\\ 0 & 1\end{array}\right] $.
By Theorem~\ref{theorem-main-2}, $-\otimes _A^LAe_2$ is an
eventually homological isomorphism if $
\gl.B<\infty$ and $\pd _C(e_2A)<\infty$, and the latter holds
precisely when $\pd _CM<\infty$. Combining Theorem~\ref{theorem-Gor},
Theorem~\ref{theorem-sing} and Theorem~\ref{theorem-Fg}, we
reobtain the following corollary in \cite{PSS14}.
\begin{corollary}{\rm (\cite[Corollary 8.17]{PSS14})}
Let $A = \left[\begin{array}{cc} B & 0
\\ M & C \end{array}\right] $ be a triangular matrix algebra
over an algebraically closed field $k$.
Suppose $\gl.B<\infty$ and $\pd _CM<\infty$. Then the following
hold.

{\rm (1)} The algebras A and C are singularly equivalent

{\rm (2)} $A$ satisfies Fg if and only if so does $C$ .

{\rm (3)} $A$ is Gorenstein if and only if so is $C$.
\end{corollary}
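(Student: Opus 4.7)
My plan is to reduce the corollary to the three comparison theorems of Section~4 by checking that the functor $j^* = -\otimes_A^L Ae_2$ arising in the triangular-matrix recollement is an eventually homological isomorphism. Once this is established, parts (1), (2) and (3) are immediate consequences of Theorems~\ref{theorem-sing}, \ref{theorem-Fg} and \ref{theorem-Gor} respectively, applied to that recollement.

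The first step is to present the recollement displayed in the excerpt as a standard one in the sense of Definition~\ref{def-standard-recollement}. Here $X = e_2 A$ is regarded as a $(C,A)$-bimodule and $Y = e_1 A$ as an $(A,B)$-bimodule, so that $j_! \cong -\otimes_C^L e_2 A$ and $i^* \cong -\otimes_A^L e_1 A$ as required; the remaining four functors are then determined by Proposition~\ref{prpo-functor}. Since $e_2 A$ is a finitely generated projective right $A$-module, a standard identification yields $X^* = \RHom_A(e_2 A, A) \cong A e_2$, so that $X^*$ is the $(A,C)$-bimodule $Ae_2$.

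Next I would verify condition (b') of Theorem~\ref{theorem-main-2}. A direct matrix computation gives $e_2 A \cong M \oplus C$ as a left $C$-module and $A e_2 \cong C$ as a right $C$-module. Consequently the hypothesis $\pd_C M < \infty$ forces $_CX$ to have finite projective dimension as a left $C$-module, i.e.\ $_CX \in K^b(\proj C^{op})$, while $X^*_C \cong C \in K^b(\proj C)$ is automatic. Combined with the standing hypothesis $\gl B < \infty$, Theorem~\ref{theorem-main-2} then asserts that $j^*$ is an eventually homological isomorphism.

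Having this in hand, part (3) follows from Theorem~\ref{theorem-Gor}, part (1) from Theorem~\ref{theorem-sing}, and part (2) from Theorem~\ref{theorem-Fg}, completing the proof. The only delicate point is bookkeeping: matching the notation ``$_CX$'' and ``$X^*_C$'' of Theorem~\ref{theorem-main-2} with the correct sides of the bimodule structures on $e_2 A$ and $A e_2$, so that the finite-projective-dimension conditions are tested against the $C$-actions coming from $C \cong e_2 A e_2$. Beyond this, the argument is pure transport of the main theorems along the triangular-matrix recollement.
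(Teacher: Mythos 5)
Your proposal is correct and follows essentially the same route as the paper: verify condition (b') of Theorem~\ref{theorem-main-2} for the standard recollement with $X=e_2A$ and $X^*=Ae_2$, using $e_2A\cong M\oplus C$ as a left $C$-module and $Ae_2\cong C$ as a right $C$-module, and then invoke Theorems~\ref{theorem-Gor}, \ref{theorem-sing}, and~\ref{theorem-Fg}. The paper is a bit terser (it states only $\gl B<\infty$ and $\pd_C(e_2A)<\infty$ without spelling out the automatic check $X^*_C\cong C\in K^b(\proj C)$), but you fill in exactly the bookkeeping it leaves implicit.
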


Also, by \cite[Example 3.4]{AKLY17}, we have another
recollement $$\xymatrix@!=6pc{ \mathcal{D}C\ar[r]
& \mathcal{D}A \ar@<-3ex>[l]
\ar@<+3ex>[l] \ar[r]|{-\otimes^L_A Ae_1} &
\mathcal{D}B \ar@<-3ex>[l]|{-\otimes^L_B e_1A}
\ar@<+3ex>[l]},$$ and similarly,
we recover the following result.
\begin{corollary}{\rm (\cite[Corollary 8.19]{PSS14})}
Let $A = \left[\begin{array}{cc} B & 0
\\ M & C \end{array}\right] $ be a triangular matrix algebra
over an algebraically closed field $k$.
Suppose $\gl.C<\infty$ and $\pd M_B<\infty$. Then the following
hold.

{\rm (1)} {\rm (See \cite[Theorem 4.1]{Chen09})}The algebras A and B are singularly equivalent

{\rm (2)} $A$ satisfies Fg if and only if so does $B$ .

{\rm (3)} $A$ is Gorenstein if and only if so is $B$.
\end{corollary}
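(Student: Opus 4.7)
The plan is to parallel the proof of the immediately preceding corollary, but applied to the second recollement $(\mathcal{D}C,\mathcal{D}A,\mathcal{D}B)$ displayed just above the statement, in which the roles of the two outer derived categories have been swapped. Concretely, I will verify the hypothesis of Theorem~\ref{theorem-main-2} so as to conclude that $j^*=-\otimes_A^LAe_1$ is an eventually homological isomorphism, after which assertions (1), (2) and (3) drop out of Theorem~\ref{theorem-sing}, Theorem~\ref{theorem-Fg}(2) and Theorem~\ref{theorem-Gor} respectively.

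First I identify the recollement data. By \cite[Example 3.4]{AKLY17} this second recollement satisfies $j_!\cong-\otimes_B^Le_1A$ and $j^*\cong-\otimes_A^LAe_1$, so the bimodule $X$ defining $j_!$ is ${}_Be_1A_A$ and, by Proposition~\ref{prpo-functor}, $X^*=\RHom_A(e_1A,A)\cong Ae_1$ as an $A^{\op}\otimes B$-module. When matching with the statement of Theorem~\ref{theorem-main-2}, the algebra $C$ of the present corollary plays the role of the theorem's ``$B$'' and our $B$ plays the role of the theorem's ``$C$''.

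Next I verify condition (b') of Theorem~\ref{theorem-main-2}. The global dimension requirement reads $\gl C<\infty$, which is one of our hypotheses. For the two bimodule conditions, a direct matrix computation gives ${}_B(e_1A)\cong{}_BB$, which is trivially in $K^b(\proj B^{\op})$, while as right $B$-modules $(Ae_1)_B\cong B_B\oplus M_B$. Hence $(Ae_1)_B\in K^b(\proj B)$ if and only if $\pd M_B<\infty$, which is our other hypothesis. Thus $j^*$ is an eventually homological isomorphism, and parts (1)--(3) follow from the three theorems cited above, the conclusions now comparing $A$ with $B$ because $B$ is the target of $j^*$.

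There is essentially no serious obstacle: the argument is a direct transcription of the proof of the preceding corollary with the second of the two triangular-matrix recollements in place of the first. The only delicate point is bookkeeping, namely keeping straight which outer algebra plays which role of Theorem~\ref{theorem-main-2}, so that one correctly reads off the conclusion as a statement about $A$ and $B$ rather than about $A$ and $C$.
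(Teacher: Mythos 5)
Your proposal is correct and coincides with the paper's argument: the paper's proof of this corollary is essentially the single word ``similarly,'' pointing to the second recollement $(\mathcal{D}C,\mathcal{D}A,\mathcal{D}B)$ and the same chain of Theorem~\ref{theorem-main-2}, Theorem~\ref{theorem-sing}, Theorem~\ref{theorem-Fg} and Theorem~\ref{theorem-Gor}. Your bookkeeping ($X=e_1A$, $X^*=Ae_1\cong B\oplus M$ as a right $B$-module, condition (b') reducing to $\gl C<\infty$ and $\pd M_B<\infty$) is exactly what that ``similarly'' is meant to unfold.
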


In \cite{Nag11}, Gorenstein Nakayama algebras are reduced to
(local) selfinjective Nakayama algebras, via idempotents
in the situation of Corollary~\ref{cor-idem}.
Thus, Gorenstein Nakayama algebras satisfy the
Fg condition because so do selfinjective Nakayama algebras.
Now, we will reduce derived discrete
algebras by recollements with the functor $j^*$ being an
eventually homological isomorphism.

From \cite{Voss01}, an algebra
$A$ is said to be {\it derived discrete} provided for
every positive element $\mathbf{d}  \in
K_0(A)^{(\mathbb{Z})}$ there are only finitely many isomorphism
classes of indecomposable objects $X$ in $\mathcal{D}^b(A)$ of
cohomology dimension vector $(\underline{\dim} H^p(X))_{p \in \mathbb{Z}}
= \mathbf{d}$. Up to derived
equivalent, a basic connected derived
discrete algebra is either a piecewise hereditary
algebra of Dynkin type or a special gentle algebra $\Lambda(r,n,m)$,
see \cite{Voss01, BGS04} for details.

\begin{proposition}\label{prop-der-disc}
Let $A$ be a derived
discrete algebra. Then we have

{\rm (1)} $\mathcal{D}A$ admits a finite stratification
of derived categories along recollements with the functor $j^*$ being an
eventually homological isomorphism, where all derived-simple factors
are either $k$ or $2$-truncated cycle algebras.

{\rm (2)} $A$ satisfies the Fg condition.
\end{proposition}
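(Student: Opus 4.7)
The plan is to invoke Vossieck's classification of derived discrete algebras \cite{Voss01, BGS04}: up to derived equivalence, a basic connected derived discrete algebra is either piecewise hereditary of Dynkin type or isomorphic to one of the explicit gentle algebras $\Lambda(r,n,m)$ whose bound quiver consists of a single oriented cycle of length $n$ together with $m$ additional \emph{tail} arrows, all relations being length-two zero paths. Since every invariant appearing in the statement is preserved under derived equivalence, I may replace $A$ by a chosen representative of its derived class. Throughout, the reduction device will be the idempotent recollement $(\mathcal{D}(A/AeA), \mathcal{D}A, \mathcal{D}(eAe))$ associated to a stratifying ideal $AeA$: by Theorem~\ref{theorem-main-2} and the paragraph preceding Corollary~\ref{cor-idem}, the functor $j^* \cong -\otimes^L_A Ae$ is an eventually homological isomorphism exactly when $\pd_{A^e}(A/AeA)<\infty$.

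For part (1), split into the two cases of Vossieck's classification. In the piecewise hereditary Dynkin case, $\gl A<\infty$, so for any primitive idempotent $e_i$ coming from a sink of a Dynkin quiver model, the quotient $A/Ae_iA$ is semisimple, $Ae_iA$ is stratifying, and $\pd_{A^e}(A/Ae_iA)<\infty$ is automatic from finite global dimension. Peeling off one sink vertex at a time yields a chain of recollements whose derived-simple factors are all $k$. For $\Lambda(r,n,m)$, I would take $e$ to be the sum of the primitive idempotents at the tail vertices; then $A/AeA$ is a semisimple algebra while $eAe$ is either a $2$-truncated cycle algebra (when $m=0$) or a gentle algebra of the same shape with strictly smaller $m$. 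The stratifying property together with $\pd_{A^e}(A/AeA)<\infty$ can be checked by writing down the finite minimal projective bimodule resolution afforded by the special biserial structure. An induction on $m$ then terminates in a $2$-truncated cycle algebra.

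For part (2), the base cases are $k$, which satisfies Fg trivially, and the $2$-truncated cycle algebras $kC_n/\rad^2$, which are selfinjective Nakayama and whose Hochschild cohomology ring and Ext algebra over the simple top are well known to satisfy Fg (this is exactly the same base case employed by Nagase in \cite{Nag11} to deduce Fg for Gorenstein Nakayama algebras from his idempotent reduction). Granted these base cases, apply Theorem~\ref{theorem-Fg}(2) inductively to each layer of the stratification produced in part (1) to conclude that $A$ satisfies Fg.

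The main obstacle is the combinatorial step in the gentle case of part (1): producing the explicit idempotent $e$ in $\Lambda(r,n,m)$ and verifying that $AeA$ is stratifying with $\pd_{A^e}(A/AeA)<\infty$. This requires assembling a finite projective $A^e$-resolution of $A/AeA$ from the gentle monomial relations; once this is in hand, the remainder of the argument is essentially formal via the machinery already developed in Section~3 and Section~4.
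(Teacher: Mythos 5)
Your proposal and the paper diverge sharply on part (1). The paper does not construct the stratification from scratch: it cites \cite[Theorem 19]{Q16}, which already provides, for any derived discrete algebra, a finite stratification along $n$-recollements whose derived-simple factors are $k$ or $2$-truncated cycle algebras, with all six functors restricting to $K^b(\proj)$ and $K^b(\inj)$. The only new work in the paper's proof is to observe, using \cite[Proposition 8]{Q16}, that at each step either $\gl A<\infty$ (so $\gl B<\infty$) or else exactly one of $B$, $C$ has finite global dimension, and then to apply Theorem~\ref{theorem-main-1} to conclude that $j^*$ in that recollement, \emph{or} $i^!$ in the reflected recollement obtained by shifting one layer down the ladder, is an eventually homological isomorphism. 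You instead try to rebuild the stratification by hand via idempotent recollements $(\mathcal{D}(A/AeA),\mathcal{D}A,\mathcal{D}(eAe))$.

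There are two genuine problems with your route. First, and most importantly, you concede the decisive combinatorial step yourself: you never exhibit the idempotent $e$ in $\Lambda(r,n,m)$, never prove $AeA$ is stratifying, and never produce the finite $A^e$-projective resolution of $A/AeA$. This is precisely the content that \cite{Q16} supplies, so as written the proposal has a hole exactly where the argument must do work. Second, your choice of $e$ looks inconsistent with the hypotheses of Theorem~\ref{theorem-main-1}. If $e$ is the sum of the tail idempotents, then $eAe$ lives on the tail (a directed $\mathbb{A}$-type piece of finite global dimension), while $A/AeA$ is the cycle part, i.e.\ the $2$-truncated cycle algebra of \emph{infinite} global dimension; but in the recollement $(\mathcal{D}(A/AeA),\mathcal{D}A,\mathcal{D}(eAe))$ the role of $B$ is played by $A/AeA$, and Theorem~\ref{theorem-main-1} demands $\gl B<\infty$. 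Conversely, if you peel off a single tail vertex so that $eAe\cong k$, then $A/AeA\cong\Lambda(r,n,m-1)$ still has infinite global dimension, again landing on the wrong side. To handle this, the paper crucially relies on the $n$-recollement structure to reflect the recollement and use $i^!$ when the finite-global-dimension algebra sits on the $\mathcal{D}C$ side; a bare stratifying-ideal recollement does not, by itself, give you that freedom. Your statement ``$A/AeA$ is a semisimple algebra while $eAe$ is either a $2$-truncated cycle algebra (when $m=0$) \ldots'' is also internally inconsistent: when $m=0$ there are no tail vertices and $e=0$.

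Part (2) of your proposal matches the paper's argument: once (1) is in hand, the base cases $k$ and the selfinjective Nakayama ($2$-truncated cycle) algebras satisfy Fg (the paper cites \cite[Section 4]{B08}), and Theorem~\ref{theorem-Fg} transports Fg across each layer of the stratification. So the gap is concentrated in (1): either fill in the missing resolution and the direction-of-reduction issue, or simply invoke \cite[Theorem 19 and Proposition 8]{Q16} as the paper does.
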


\begin{proof}
(1): It follows from \cite[Theorem 19]{Q16} that $\mathcal{D}A$ admits a finite stratification
of derived categories along $n$-recollements, where all derived-simple factors
are either $k$ or $2$-truncated cycle algebras. In these recollements
($\mathcal{D} B$,\ $\mathcal{D} A$,\ $\mathcal{D} C$,\ $i^*,i_*,i^!,j_!,j^*,j_*$),
all sixes functors restrict to both $K^b(\proj)$ and $K^b(\inj)$. Moreover,
either $\gl A<\infty $ and hence $\gl B<\infty $, or $\gl A=\infty $ and one
of $B$ and $C$ has finite global dimension (see \cite[proposition 8]{Q16}).
By Theorem~\ref{theorem-main-1}, the functor $j^*$ in
($\mathcal{D} B$,\ $\mathcal{D} A$,\ $\mathcal{D} C$,\ $i^*,i_*,i^!,j_!,j^*,j_*$)
or the functor $i^!$ in
($\mathcal{D} C$,\ $\mathcal{D} A$,\ $\mathcal{D} B$,\ $j^*, j_*, j^\theta,
i_*,i^!, i_\theta$) is
an eventually homological isomorphism.

(2): By \cite[Section 4]{B08}, selfinjective
Nakayama algebras satisfy the Fg condition. Therefore, it follows from (1)
and Theorem~\ref{theorem-Fg} that A satisfies the Fg condition.
\end{proof}

\begin{remark}
{\rm Proposition~\ref{prop-der-disc} add derived
discrete algebras to the classes of algebras where the Fg is known to
hold, e.g. Gorenstein Nakayama algebras \cite{Nag11}, group algebras of finite groups
\cite{E61, V59}
and local finite dimensional algebras
which are complete intersections \cite{G74}.}

\end{remark}

\noindent {\footnotesize {\bf ACKNOWLEDGMENT.} This work is supported by
the National Natural Science Foundation of China (11701321, 11601098) and Yunnan Applied Basic Research
Project 2016FD077.}

\footnotesize

\end{document}